\newtheorem{theorem}[equation]{Theorem}
\newtheorem{lemma}[equation]{Lemma}
\newtheorem{definition}[equation]{Definition}
\newtheorem{proposition}[equation]{Proposition}
\newtheorem{corollary}[equation]{Corollary}
\theoremstyle{remark}
\newtheorem{example}[equation]{Example}
\newtheorem{remark}[equation]{Remark}
\newtheorem*{notation}{Notation}
\numberwithin{equation}{section}
\newcommand{\Gg}{\mathcal{G}} % Conjunto dos geradores
\newcommand{\rel}{\mathcal{R}} % Relações
\newcommand{\triple}{(\Gg,\rel_N,\rel_S)} % Tripla geradora
\newcommand{\cn}{\mathbb{C}} % Conjunto dos números complexos
\newcommand{\rn}{\mathbb{R}} % Conjunto dos números reais
\newcommand{\nn}{\mathbb{N}} % Conjunto dos números naturais
\newcommand{\falg}{\mathcal{B}_{\Gg}} % Álgebra livre gerada por \Gg.
\newcommand{\seminormset}{\mathcal{P}_{\triple}} % Conjunto das seminormas provenientes de representações da tripla.
\newcommand{\snideal}{\mathcal{N_{\triple}}} % Ideal dos elementos de seminorma zero.
\newcommand{\scj}{\subseteq} %Símbolo para subconjunto
\newcommand{\cuntz}[1]{\mathcal{O}_{#1}} %Álgebras de Cuntz ou Cuntz-Krieger
\newcommand{\elu}{\widetilde{\mathcal{O}}_A} %Álgebras de Exel-Laca unital.
\newcommand{\fr}{\mathbb{F}}
\newcommand{\newfd}[5]{\begin{array}{cccc}{#1}: &{#2} &\longrightarrow &{#3} \\& {#4} &\longmapsto &{#5}\end{array}}
\newcommand{\circulo}{\mathbb{T}}
\newcommand{\ug}{\ugg{E}}
\newcommand{\ugg}[1]{\mathcal{#1}}
\newcommand{\ugquadg}[1]{(#1^0,\ugg{#1}^1,s,r)}
\newcommand{\ugquad}{\ugquadg{E}}
\DeclareMathOperator{\spanop}{span}
\DeclareFontFamily{U}{matha}{\hyphenchar\font45}
\DeclareFontShape{U}{matha}{m}{n}{
      <5> <6> <7> <8> <9> <10> gen * matha
      <10.95> matha10 <12> <14.4> <17.28> <20.74> <24.88> matha12
      }{}
\DeclareSymbolFont{matha}{U}{matha}{m}{n}
\DeclareFontFamily{U}{mathx}{\hyphenchar\font45}
\DeclareFontShape{U}{mathx}{m}{n}{
      <5> <6> <7> <8> <9> <10>
      <10.95> <12> <14.4> <17.28> <20.74> <24.88>
      mathx10
      }{}
\DeclareSymbolFont{mathx}{U}{mathx}{m}{n}
\DeclareMathDelimiter{\vvvert}{0}{matha}{"7E}{mathx}{"17}
\title{Infinite sum relations on universal C*-algebras}
\author{Giuliano Boava}
\author{Gilles G. de Castro}
\thanks{The second author was partially supported by Capes-PrInt Brazil grant number 88881.310538/2018-01.}
\address{Departamento de Matemática, Universidade Federal de Santa Catarina, 88040-970 Florianópolis SC, Brazil.}
\email{g.boava@ufsc.br \\ gilles.castro@ufsc.br}
\keywords{Universal C*-algebras, strong operator topology, Cuntz algebras, Exel-Laca algebras, ultragraphs}
\subjclass[2020]{Primary: 46L05, Secondary: 46L55}
\begin{document}

\maketitle

\begin{abstract}
We extend the usual theory of universal C*-algebras from generators and relations in order to allow some relations to be described using the strong operator topology. In particular, we can allow some infinite sum relations. We prove a universal property for the algebras we define and we show how the Cuntz algebra of infinite isometries as well as the Exel-Laca algebras can be described using infinite sum relations. Finally, we give some sufficient conditions for when a C*-algebra generated by projections and partial isometries is a universal C*-algebra using only norm relations, in case one still wants to avoid using relations with respect to the strong operator topology.
\end{abstract}

\section{Introduction}

One tool used to define a C*-algebra is to consider a certain family of operators on a Hilbert space $H$ and then consider the C*-subalgebra of $B(H)$ generated by this family, that is, we define a C*-algebra represented on $H$. If we want the family to satisfy certain conditions, such as reflect the properties of another mathematical object, one can ask to what extent the algebra depends of the representation or if it can be uniquely defined.

Looking at the history of C*-algebras generated by partial isometries - including projections, isometries and unitaries - we see examples of such uniqueness theorems: Coburn's work on the C*-algebra generated by one isometry \cite{MR213906}, Cuntz's result on the simplicity of certain algebras generated by isometries with mutually orthogonal ranges \cite{MR467330}, and the uniqueness theorem of Cuntz and Krieger for the C*-algebra defined from a square matrix of zeros and ones \cite{MR561974}.

Instead of proving a uniqueness theorem for each class of algebra, we can try to define a C*-algebra in terms of a universal property. This was done by Blackadar in \cite{MR813640} by defining a universal C*-algebra in terms of generators and relations. Although he hinted that we could use any relation for operators on a Hilbert space or for elements on a C*-algebra, he only developed the theory for what we call norm relations in this paper. His first example said that any C*-algebra is universal with respect to all elements as generator and all *-algebraic relations in the algebra, however this is not very helpful when defining a C*-algebra. Indeed, if we define a C*-algebra using operators as explained above, we would like to use a set of generators corresponding the operators used to define the C*-algebra. For the algebras of the previous paragraph, Blackadar showed how to do that, except for the Cuntz-Krieger algebras associated with infinite matrices and implicitly the Cuntz algebra $\cuntz{\infty}$, since we need to deal with infinite sums converging in the strong operator topology and these cannot be directly described in terms of norm relations.

Going back to the history of C*-algebras generated by partial isometries after Blackadar's paper, we get the impression that using a relation involving an infinite sum is a forbidden practice. For example, when studying Cuntz-Krieger algebras of infinite matrices, Exel and Laca in \cite{MR1703078} state that the theory developed in \cite{MR813640} ``breaks down when the relations involve the strong
topology'' so they found norm relations in order to define a universal C*-algebra for infinite matrices of zeros and ones. Other examples are considering at first only row-finite graphs for the C*-algebra of graph \cite{MR1626528} and asking that an integral domain has the property that the ideal generated by a non-zero element has finite index \cite{MR2732050}.

The main goal of this paper is to extend Blackadar's construction in order to define a universal C*-algebra that allows for relations using the strong operator topology, more specifically we allow as a relation a net of operators converging to zero with respect to the strong operator topology. This is done in Section \ref{sec:definition.universal.algebra}. For the universal property, in order to define a *-homomorphism from our algebra to a C*-algebra $A$, we need a faithful representation of $A$ on a Hilbert space so that we can have access to the strong operator topology (Theorem \ref{thm:universal.property}). Another difficulty added by allowing relations involving the strong operator topology is that not all representations of the universal C*-algebra satisfies the relations, as shown in Example \ref{sec:cuntz}, however we prove that there exists a faithful representation of the algebra that satisfies all relations (Theorem \ref{thm:faithful.rep}).

In Section \ref{sec:Examples}, we work out in details how we can indeed allow infinite sum relations in order to define the Cuntz algebra and Exel-Laca algebras. In particular for Exel-Laca algebras, the relations we use are exactly the infinite sum relations Exel and Laca avoided in \cite{MR1703078}.

Finally, despite the main goal of this paper being using relations with respect to the strong operator topology in order to define a universal version of a given C*-algebra, in Section \ref{sec:algebra.partial.isometry}, we deal only with norm relations. We show that under some conditions a C*-algebra generated by projections and partial isometries, independently on how it was defined, can be described using as generators a family corresponding to the projections and partial isometries and only using norm relations (Theorems \ref{lem:commuting.projections.universality} and \ref{thm:alg.gen.proj.part.iso}), in case one still wants to avoid relations using the strong operator topology.

The other result of Section \ref{sec:algebra.partial.isometry}, namely Corollary \ref{cor:subalgebra.isomorphism}, gives a strategy for finding a ``correct'' set of norm relations describing a certain C*-algebra. As mentioned above, sometimes a C*-algebra generated by partial isometries (and projections) is originally defined represented in a Hilbert space. To define a universal version of this algebra, we can start by using all algebraic relations that can be written using generators associated to each partial isometry (including the projections). The result of Corollary \ref{cor:subalgebra.isomorphism} says that at least we have a *-isomorphism between a distinguished commutative C*-subalgebra generated by projections of the universal and represented versions. One way of using this conclusion is in avoiding relations with respect to the strong operator topology involving only a distinguished subset of mutually commuting projections by replacing them with all algebraic relations. However, one usually wants to choose a smaller list of relations and expects that this list encodes all other relations. Another way of using the conclusion of Corollary \ref{cor:subalgebra.isomorphism} is that we can take the spectrum of this commutative C*-subalgebra generated by the projections to define, for example, a partial action or a groupoid and then a C*-algebra from them. If the new construction and the universal C*-algebra of the restricted list of relations coincide, it means that we arrived at the ``correct'' set of relations. Implicitly, this was done in \cite{MR1703078}, when they showed that their version of the Cuntz-Krieger algebras for infinite matrices is isomorphic to a partial crossed product.

\section{Strong operator topology relations on universal C*-algebras}\label{sec:definition.universal.algebra}

In this section, we adapt Blackadar's construction \cite{MR813640} to include relations that can be used to describe how some infinite sums should behave with respect to the strong operator topology (SOT) when a C*-algebra satisfying these relations is represented on a Hilbert space.

Let $\Gg$ be a non-empty set and $\Gg^*$ be a disjoint copy of $\Gg$, where each element of $\Gg^*$ is denoted by $g^*$ for some $g\in\Gg$. Let $\falg:=\cn\langle\Gg\cup\Gg^*\rangle$ be the free $\cn$-algebra generated by $\Gg\cup\Gg^*$ with the standard involution, that is, $\falg$ is the free *-algebra over $\cn$ generated by $\Gg$.

\begin{definition}\label{def:relations}
Let $\Gg$ and $\falg$ be as above. We define a \emph{norm relation} as a pair $(x,\eta)\in\falg\times\rn_+$, and a \emph{SOT relation} as a net $(x_i)_{i\in I}$ on $\falg$. A \emph{generating triple} is a triple $\triple$, where $\rel_N$ is a set of norm relations and $\rel_S$ is a set of SOT relations. 
\end{definition}

The idea is that a norm relation $(x,\eta)$ is interpreted as the inequality $\|x\|\leq\eta$ when $x$ is represented on a C*-algebra, and a SOT relation $(x_i)_{i\in I}$ interpreted as $s-\lim_{i\to\infty}x_i=0$ (limit on the strong operator topology), when the net is represented as operators on a Hilbert space. We use this as motivation for the following definition.

\begin{definition}
Let $\triple$ be a generating triple and $H$ a Hilbert space. We say that a *-algebra homomorphism $\rho:\falg\to B(H)$ is a \emph{representation of $\triple$} if $\|\rho(x)\|\leq\eta$ for every $(x,\eta)\in\rel_N$ and $s-\lim_{i\to\infty}\rho(x_i)=0$ for every $(x_i)_{i\in I}\in\rel_S$.
\end{definition}

\begin{remark}\label{rem:hom.from.generators}
Let $\triple$ be a generating triple. Since $\falg$ is the free *-algebra over $\cn$ generated by $\Gg$, in order to describe a *-homomorphism $\varphi:\falg\to A$, where $A$ is a *-algebra, it is sufficient to know the values $\varphi(g)$ for the elements $g\in\Gg$. In particular, if $\{T_g\}_{g\in \Gg}$ is a family of operators in $B(H)$, we say that this family forms a representation of $\triple$ if the map $\rho:\falg\to B(H)$ defined by $\rho(g)=T_g$ is a representation of $\triple$.
\end{remark}

In order to build a universal C*-algebra from a generating triple, we need to somehow control the representations of the triple. This is done with the notion of admissibility as in \cite[Definition 1.1]{MR813640}.

\begin{definition}
We say that the generating triple $\triple$ is \emph{admissible} if for every family $\{\rho_{\alpha}:\falg\to B(H_\alpha)\}_{\alpha}$ of representations of $\triple$, the map
\begin{equation}\label{eq:sum.representations}
\newfd{\bigoplus_{\alpha}\rho_{\alpha}}{\falg}{B\left(\bigoplus_\alpha H_\alpha\right)}{x}{\bigoplus_{\alpha}\rho_{\alpha}(x)}
\end{equation}
is a well-defined representation of $\triple$.
\end{definition}

The following result is useful when checking admissibility of a triple.

\begin{proposition}\label{prop:admissibility}
Let $\triple$ be a generating triple. If for every $g\in\Gg$ there exists $\eta_g\in\rn_+$ and for every net $\mathbf{x}=(x_i)_{i\in I}\in\rel_S$ there exists $\eta_{\mathbf{x}}\in\rn_+$ such that for every representation $\rho$ of $\triple$, we have that $\|\rho(g)\|\leq \eta_g$ and $\|\rho(x_i)\|\leq \eta_{\mathbf{x}}$ for every $i\in I$, then $\triple$ is admissible.
\end{proposition}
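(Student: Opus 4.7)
The plan is to verify, for an arbitrary family $\{\rho_\alpha\}$ of representations of $\triple$, the three things packaged into admissibility: that $\bigoplus_\alpha \rho_\alpha(x)$ is a bounded operator for every $x\in\falg$ (so the map is well-defined with values in $B(\bigoplus_\alpha H_\alpha)$), that it satisfies the norm relations, and that it satisfies the SOT relations. The first two will be essentially bookkeeping; the real content of the hypothesis will be used for the SOT relations.

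First I would handle boundedness of $\bigoplus_\alpha \rho_\alpha(x)$ by induction on the *-algebra structure of $\falg$. The hypothesis gives a uniform bound $\eta_g$ for $\|\rho_\alpha(g)\|$ for each generator $g\in\Gg$, independent of $\alpha$. Since $\|\rho_\alpha(y+z)\|\leq \|\rho_\alpha(y)\|+\|\rho_\alpha(z)\|$, $\|\rho_\alpha(yz)\|\leq \|\rho_\alpha(y)\|\|\rho_\alpha(z)\|$, $\|\rho_\alpha(\lambda y)\|=|\lambda|\|\rho_\alpha(y)\|$ and $\|\rho_\alpha(y^*)\|=\|\rho_\alpha(y)\|$, an easy induction produces, for every $x\in\falg$, a constant $\eta(x)\in\rn_+$ with $\|\rho_\alpha(x)\|\leq \eta(x)$ for all $\alpha$. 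Therefore $\sup_\alpha \|\rho_\alpha(x)\|<\infty$, so $\bigoplus_\alpha \rho_\alpha(x)$ defines a bounded operator on $\bigoplus_\alpha H_\alpha$, and since direct sums of *-homomorphisms are *-homomorphisms, $\bigoplus_\alpha \rho_\alpha$ is a *-algebra homomorphism. The norm relations come for free: if $(x,\eta)\in\rel_N$, then $\|\rho_\alpha(x)\|\leq \eta$ for every $\alpha$, hence $\|\bigoplus_\alpha \rho_\alpha(x)\|=\sup_\alpha \|\rho_\alpha(x)\|\leq \eta$.

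The main obstacle is the SOT relations, and this is where the uniform bound $\eta_{\mathbf{x}}$ hypothesized for each net $\mathbf{x}=(x_i)_{i\in I}\in\rel_S$ becomes essential. Fix such a net and a vector $\xi=(\xi_\alpha)_\alpha \in \bigoplus_\alpha H_\alpha$. I would compute
\[
\Bigl\|\bigoplus_\alpha \rho_\alpha(x_i)\,\xi\Bigr\|^2 = \sum_\alpha \|\rho_\alpha(x_i)\xi_\alpha\|^2,
\]
note the pointwise majoration $\|\rho_\alpha(x_i)\xi_\alpha\|^2 \leq \eta_{\mathbf{x}}^2\|\xi_\alpha\|^2$, and run a dominated-convergence style argument: given $\varepsilon>0$, choose a finite $F$ with $\sum_{\alpha\notin F}\|\xi_\alpha\|^2 < \varepsilon/(2\eta_{\mathbf{x}}^2)$, which controls the tail uniformly in $i$; for the finitely many $\alpha\in F$, use that each $\rho_\alpha$ is a representation of $\triple$ (so $\rho_\alpha(x_i)\xi_\alpha\to 0$) together with the directedness of $I$ to find a single $i_0$ beyond which $\sum_{\alpha\in F}\|\rho_\alpha(x_i)\xi_\alpha\|^2 < \varepsilon/2$. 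This gives $s\text{-}\lim_i \bigoplus_\alpha \rho_\alpha(x_i)=0$ and completes the verification that $\bigoplus_\alpha \rho_\alpha$ is a representation of $\triple$.
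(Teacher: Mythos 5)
Your proposal is correct and follows essentially the same route as the paper: a norm-induction on $\falg$ for well-definedness, the supremum identity for norm relations, and the split into a finite set $F$ (handled by directedness of $I$) plus a tail controlled by the uniform bound $\eta_{\mathbf{x}}$ for the SOT relations. The only cosmetic difference is that the paper writes the tail threshold as $\varepsilon/(2(\eta_{\mathbf{x}}^2+1))$ to avoid dividing by zero when $\eta_{\mathbf{x}}=0$, which you should also do.
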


\begin{proof}
By using the properties of the norm on a C*-algebra, we can easily check that for every $x\in\falg$, there exists $\eta_x\in\rn_+$ such that $\|\rho(x)\|\leq\eta_x$ for all representation $\rho$ of $\triple$. This implies that given a family $\{\rho_{\alpha}:\falg\to B(H_\alpha)\}_{\alpha\in \Omega}$ of representations of $\triple$, the map given by (\ref{eq:sum.representations}) is well-defined. It remains to show that $\bigoplus_{\alpha\in\Omega}\rho_{\alpha}$ is a representation of $\triple$.

For $(x,\eta)\in\rel_N$, we have that
\[\left\|\bigoplus_{\alpha\in\Omega}\rho_{\alpha}(x)\right\|=\sup_{\alpha\in\Omega}\|\rho_\alpha(x)\|\leq\eta.\]

Suppose now that $\mathbf{x}=(x_i)_{i\in I}\in\rel_S$ and take $\xi=(\xi_\alpha)_{\alpha\in\Omega}\in\bigoplus_{\alpha\in\Omega} H_\alpha$. Given $\varepsilon>0$, there exists a finite set $F\scj\Omega$ such that $\sum_{\alpha\in\Omega\setminus F}\|\xi_\alpha\|^2<\frac{\varepsilon}{2(\eta_{\mathbf{x}}^2+1)}$. Also, since $\lim_{i\to\infty}\rho_{\alpha}(x_i)(\xi_\alpha)=0$ for every $\alpha\in\Omega$, using that $I$ is a directed set, we can find $i_0\in I$ such that for every $i\geq i_0$ and every $\alpha\in F$, we have that $\|\rho_\alpha(x_i)(\xi_\alpha)\|^2<\frac{\varepsilon}{2(|F|+1)}$, where $|F|$ is the cardinality of $F$. Hence, for $i\geq i_0$,
\[\left\|\bigoplus_{\alpha\in\Omega}\rho_{\alpha}(x_i)(\xi)\right\|^2=\sum_{\alpha\in\Omega}\|\rho_{\alpha}(x_i)(\xi_\alpha)\|^2\leq \sum_{\alpha\in F}\|\rho_{\alpha}(x_i)(\xi_\alpha)\|^2+\sum_{\alpha\in\Omega\setminus F}\|\rho_{\alpha}(x_i)\|^2\|\xi_\alpha\|^2< \]
\[|F|\frac{\varepsilon}{2(|F|+1)}+\eta_{\mathbf{x}}^2\frac{\varepsilon}{2(\eta_{\mathbf{x}}^2+1)}<\varepsilon.\]
Therefore, $\lim_{i\to\infty}\bigoplus_{\alpha\in\Omega}\rho_{\alpha}(x_i)(\xi)=0$, and hence $s-\lim_{i\to\infty} \bigoplus_{\alpha\in\Omega}\rho_{\alpha}(x_i)=0$, since $\xi$ was arbitrary.
\end{proof}

The following definition is motivated by the previous proposition.
\begin{definition}
Let $\triple$ be a generating triple. We say that a relation $\mathbf{x}=(x_i)_{i\in I}\in\rel_S$ is bounded by $\eta\in\rn_+$ if for every representation $\rho$ of $\triple$, we have that $\|\rho(x_i)\|\leq \eta$ for every $i\in I$.
\end{definition}

The next goal is to define a C*-seminorm on $\falg$ in order to build a C*-algebra from a generating triple $\triple$. For that, let $\seminormset$ be the set of all C*-seminorms $p:\falg\to\rn$ such that $p(x)=\|\rho(x)\|$ for all $x\in\falg$, where $\rho$ is a representation of $\triple$.

\begin{proposition}
Let $\triple$ be an admissible generating triple. Then, \[\sup_{p\in\seminormset}p(x)<\infty\] for all $x\in\falg$.
\end{proposition}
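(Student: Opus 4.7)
The plan is to argue by contradiction, leveraging admissibility directly. Suppose there exists some $x_0\in\falg$ for which $\sup_{p\in\seminormset}p(x_0)=\infty$. Since every $p\in\seminormset$ has the form $p(\cdot)=\|\rho(\cdot)\|$ for some representation $\rho$ of $\triple$, we can extract a sequence of representations $\rho_n:\falg\to B(H_n)$ with $\|\rho_n(x_0)\|\geq n$ for each $n\in\nn$.

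Next I apply the admissibility hypothesis to the countable family $\{\rho_n\}_{n\in\nn}$. By definition, $\bigoplus_{n}\rho_n$ must be a well-defined representation of $\triple$ on $\bigoplus_n H_n$; in particular, for every $y\in\falg$, the expression $\bigoplus_n\rho_n(y)$ must define a bounded operator in $B(\bigoplus_n H_n)$. Taking $y=x_0$, the norm of this direct sum operator is exactly $\sup_n\|\rho_n(x_0)\|$, which is infinite by construction. This contradicts membership in $B(\bigoplus_n H_n)$, so no such $x_0$ can exist.

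I do not expect any serious obstacle; the proof is essentially a direct unpacking of the admissibility condition. The one point to be careful about is the interpretation of the phrase ``well-defined'' in the definition of admissibility: it must include the requirement that $\bigoplus_\alpha \rho_\alpha(x)$ actually belongs to $B(\bigoplus_\alpha H_\alpha)$, i.e.\ that it be a bounded operator. Once this is granted, the uniform bound $\sup_\alpha\|\rho_\alpha(x)\|<\infty$ for each $x$ follows automatically, and a countable subfamily realizing the (hypothetical) divergence is enough to derive the contradiction without any set-theoretic worries about the proper class of all representations.
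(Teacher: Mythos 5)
Your argument is correct and is essentially identical to the paper's own proof: both argue by contradiction, extract a sequence of representations $\rho_n$ with $\|\rho_n(x)\|\geq n$, and observe that admissibility would force $\bigoplus_n\rho_n(x)$ to be a bounded operator, which is impossible. Your additional remark about reading ``well-defined'' as including boundedness of the direct-sum operator is exactly the intended interpretation.
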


\begin{proof}
Suppose that $x\in\falg$ is such that
\[\sup_{p\in\seminormset}p(x)=\infty.\]
Then, for each $n\in\nn$, there exists a representation $\rho_n$ of $\triple$ such that $\|\rho_n(x)\|\geq n$. This would imply that $\bigoplus_{n\in\nn}\rho_n(x)$ is an unbounded operator, contradicting the admissibility of $\triple$.
\end{proof}

Let $\triple$ be an admissible generating triple. For each $x\in\falg$, we define $\vvvert x\vvvert=\sup_{p\in\seminormset}p(x)$ and $\snideal=\{x\in\falg\mid \vvvert x\vvvert =0\}$. It's well known that $\vvvert\cdot\vvvert:\falg\to\rn$ is a C*-seminorm, $\snideal$ is a self adjoint ideal of $\falg$ and $\|\cdot\|:\falg/\snideal\to\rn$ given by
\[\|x+\snideal\|=\inf_{y\in\snideal}\vvvert x+y \vvvert\]
is a norm.
\begin{definition}
Let $\triple$ be an admissible generating triple. We define the C*-algebra generated by $\triple$ as the completion of $\falg/\snideal$ with respect to the norm $\|\cdot\|$ above and we denote it by $C^*\triple$. We also denote by $i:\falg\to C^*\triple$ the canonical *-homomorphism.
\end{definition}

\begin{remark}
If there are no SOT relations, that is $\rel_S=\emptyset$, then $C^*\triple=C^*(\Gg,\rel_N)$, where $C^*(\Gg,\rel_N)$ is the C*-algebra considered by Blackadar in \cite{MR813640}.
\end{remark}

We are now ready to state and prove a universal property for $C^*\triple$.

\begin{theorem}\label{thm:universal.property}
Let $\triple$ be an admissible generating triple and let $A$ be a C*-algebra. If $\varphi:\falg\to A$ is a *-homomorphism such that there exists a faithful representation $\pi:A\to B(H)$ of $A$ on a Hilbert space $H$ for which $\pi\circ\varphi$ is a representation of $\triple$, then there is a unique *-homomorphism $\widetilde{\varphi}:C^*\triple\to A$ such that $\varphi=\widetilde{\varphi}\circ i$.
\end{theorem}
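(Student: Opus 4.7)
The plan is to exploit the universal property built into the very definition of $\vvvert \cdot \vvvert$ as a supremum over C*-seminorms coming from representations of $\triple$. The hypothesis hands us a representation of the triple, namely $\pi \circ \varphi$, so the C*-seminorm $p(x) := \|\pi(\varphi(x))\|$ belongs to $\seminormset$. Consequently $\|\pi(\varphi(x))\| \leq \vvvert x \vvvert$ for every $x \in \falg$, and since $\pi$ is faithful it is isometric, giving $\|\varphi(x)\|_A \leq \vvvert x \vvvert$.

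From this norm bound I would carry out the standard factorization through the quotient and the completion. First, $\snideal \scj \ker\varphi$, so $\varphi$ descends to a well-defined $*$-homomorphism $\overline{\varphi} : \falg/\snideal \to A$. For the induced norm, for any $x \in \falg$ and any $y \in \snideal$ one has $\overline{\varphi}(x + \snideal) = \varphi(x) = \varphi(x+y)$, and therefore
\[
\|\overline{\varphi}(x + \snideal)\|_A = \|\varphi(x+y)\|_A \leq \vvvert x + y \vvvert,
\]
so taking the infimum over $y \in \snideal$ yields $\|\overline{\varphi}(x+\snideal)\|_A \leq \|x + \snideal\|$. Hence $\overline{\varphi}$ is norm-contractive on $\falg/\snideal$, and by density it extends uniquely to a $*$-homomorphism $\widetilde\varphi : C^*\triple \to A$ (a $*$-homomorphism between C*-algebras being automatically contractive, extension along a dense $*$-subalgebra is the usual BLT-style argument). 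By construction $\widetilde\varphi \circ i = \varphi$.

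For uniqueness, any two $*$-homomorphisms $C^*\triple \to A$ satisfying $\widetilde\varphi \circ i = \varphi$ coincide on the dense subalgebra $i(\falg)$, hence agree everywhere by continuity.

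I do not expect a genuine obstacle here: once one recognizes that $\pi \circ \varphi$ being a representation of $\triple$ places its induced seminorm inside $\seminormset$, everything is essentially bookkeeping. The only subtle point is the use of the faithful representation $\pi$: without it, one only has information about the seminorm obtained by a representation in $B(H)$, and there is no direct way to compare it with the norm of $A$. The faithfulness of $\pi$ is precisely what converts the seminorm bound coming from $\seminormset$ into a bound on $\|\varphi(x)\|_A$, which is the step that makes the factorization possible.
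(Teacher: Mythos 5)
Your proposal is correct and follows essentially the same route as the paper: both identify $\|\pi(\varphi(\cdot))\|$ as a seminorm in $\seminormset$, use faithfulness of $\pi$ to conclude $\snideal\scj\ker\varphi$, and then factor through the quotient and completion. You simply spell out the contractivity estimate and the extension-by-density steps that the paper leaves implicit.
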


\begin{proof}
We prove that the map $\widetilde{\varphi}:C^*\triple\to A$ given by $\widetilde{\varphi}(x+\snideal)=\varphi(x)$ is well-defined. For that, take $x\in\snideal$ arbitrary. By the definition of $\snideal$, we have that $\|\pi(\varphi(x))\|=0$, which implies that $\pi(\varphi(x))=0$. Since $\pi$ is faithful, we have that $\varphi(x)=0$ showing that $\widetilde{\varphi}$ is indeed well-defined.

Clearly $\varphi=\widetilde{\varphi}\circ i$ and $\widetilde{\varphi}$ is the unique *-homomorphism from $C^*\triple$ to $A$ satisfying this property.
\end{proof}

\begin{remark}
Generally, relations defining C*-algebras are given in terms of a equality. If all products and sums are finite, we present the relation $(x,0)\in\rel_N$ as $x=0$ (or some variation of the expression). For relations involving infinite sums or products, we have to somehow transform it as net converging to zero on the strong operator topology. We will show how to this in Section \ref{sec:Examples}. 
\end{remark}

We now prove that the canonical *-homomorphism $i:\falg\to C^*\triple$ satisfies the hypothesis of Theorem \ref{thm:universal.property}. First, we need a lemma.

\begin{lemma}\label{lem:good.rep}
Let $\triple$ be an admissible generating triple. For every $x\in\falg\setminus\snideal$, there exists a representation $\rho$ of $\triple$ such that $\|\rho(x)\|=\vvvert x\vvvert$.
\end{lemma}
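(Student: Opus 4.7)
The plan is to exploit the admissibility of $\triple$ to glue together a sequence of representations whose norms on $x$ approach $\vvvert x\vvvert$ into a single representation that attains the supremum.

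First, I would unpack the definition $\vvvert x\vvvert=\sup_{p\in\seminormset}p(x)$, which by the preceding proposition is a finite real number. Since by definition every $p\in\seminormset$ has the form $y\mapsto\|\rho(y)\|$ for some representation $\rho$ of $\triple$, I can select a sequence of representations $\rho_n:\falg\to B(H_n)$ of $\triple$ with $\|\rho_n(x)\|\to\vvvert x\vvvert$.

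Next, I would form the direct sum $\rho:=\bigoplus_{n\in\nn}\rho_n:\falg\to B\bigl(\bigoplus_n H_n\bigr)$. By admissibility, $\rho$ is a well-defined representation of $\triple$, so the seminorm $y\mapsto\|\rho(y)\|$ belongs to $\seminormset$; hence $\|\rho(x)\|\leq\vvvert x\vvvert$. Conversely, using the standard identity $\bigl\|\bigoplus_n\rho_n(x)\bigr\|=\sup_n\|\rho_n(x)\|$ for direct sums of uniformly bounded operators, we obtain $\|\rho(x)\|\geq\sup_n\|\rho_n(x)\|=\vvvert x\vvvert$, so equality holds and $\rho$ is the sought representation.

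I do not foresee any genuine obstacle: the admissibility axiom was crafted precisely so that such direct-sum constructions yield representations of $\triple$, and extracting the approximating sequence is immediate from the definition of the supremum. The only small point worth checking is that each seminorm in $\seminormset$ is indeed realized as the pullback norm from some representation, but this is built into the definition of $\seminormset$.
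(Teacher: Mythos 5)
Your proposal is correct and follows essentially the same route as the paper: pick representations $\rho_n$ with $\|\rho_n(x)\|\to\vvvert x\vvvert$, form $\bigoplus_n\rho_n$, use admissibility to conclude it is a representation of $\triple$, and note that the norm of the direct sum is $\sup_n\|\rho_n(x)\|=\vvvert x\vvvert$. Nothing is missing.
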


\begin{proof}
Since $x\notin\snideal$, we have that $\vvvert x\vvvert >0$. Then, for each positive integer $n$, there exists $\rho_n$ representation of $\triple$ such that $0\leq\vvvert x \vvvert-\|\rho_n(x)\| < 1/n$. Since the triple is admissible, $\rho=\bigoplus_n \rho_n$ is a representation of $\triple$. For this $\rho$, we have that $\|\rho(x)\|=\sup_n\|\rho_n(x)\|=\vvvert x\vvvert$.
\end{proof}

\begin{theorem}\label{thm:faithful.rep}
Let $\triple$ be an admissible generating triple. Then, there exists a faithful representation $\pi_u:C^*\triple\to B(H_u)$ on a Hilbert space $H_u$ such that $\pi_u\circ i$ is a representation of $\triple$.
\end{theorem}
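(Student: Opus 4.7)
The plan is to build $\pi_u$ as the extension of a large direct sum of representations chosen so that $\vvvert \cdot \vvvert$ is attained on every non-trivial element of $\falg$. Concretely, for each $x\in\falg\setminus\snideal$, invoke Lemma \ref{lem:good.rep} to select a representation $\rho_x\colon\falg\to B(H_x)$ of $\triple$ with $\|\rho_x(x)\|=\vvvert x\vvvert$. Because $\falg$ is a set, this is a legitimate indexed family; by admissibility of $\triple$, the map
\[\rho:=\bigoplus_{x\in\falg\setminus\snideal}\rho_x\colon\falg\to B\!\left(\bigoplus_{x\in\falg\setminus\snideal}H_x\right)\]
is again a representation of $\triple$. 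Set $H_u:=\bigoplus_x H_x$.

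Next I would check that $\|\rho(y)\|=\vvvert y\vvvert$ for every $y\in\falg$. The inequality $\|\rho(y)\|\le\vvvert y\vvvert$ is immediate because $\rho$ is a representation of $\triple$, so $y\mapsto\|\rho(y)\|$ belongs to $\seminormset$. For the reverse inequality, if $y\in\snideal$ both sides vanish, while if $y\notin\snideal$, then
\[\|\rho(y)\|=\sup_{x\in\falg\setminus\snideal}\|\rho_x(y)\|\ge\|\rho_y(y)\|=\vvvert y\vvvert.\]
Hence $\rho$ is isometric with respect to the seminorm $\vvvert\cdot\vvvert$, and in particular $\ker\rho=\snideal$.

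Consequently, $\rho$ descends to a well-defined, isometric $*$-homomorphism on $\falg/\snideal$, which extends by continuity to an isometric (thus faithful) $*$-homomorphism $\pi_u\colon C^*\triple\to B(H_u)$. By construction $\pi_u(i(y))=\rho(y)$ for every $y\in\falg$, so $\pi_u\circ i=\rho$ is a representation of $\triple$, as required.

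There is no real obstacle here; the only point to be mindful of is that we need enough representations to witness the supremum defining $\vvvert\cdot\vvvert$ at every element simultaneously, which is exactly what the direct sum over $\falg\setminus\snideal$ accomplishes, and that this direct sum is allowed precisely because $\triple$ is admissible.
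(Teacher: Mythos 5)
Your proposal is correct and follows essentially the same route as the paper: both take the direct sum over all $x\in\falg\setminus\snideal$ of the norm-attaining representations supplied by Lemma \ref{lem:good.rep} and use admissibility to see that this direct sum is again a representation of $\triple$. The only difference is cosmetic: where you observe directly that $\|\rho(y)\|=\vvvert y\vvvert$ for all $y$, so that $\rho$ descends to an isometric map on $\falg/\snideal$ and extends to a faithful $\pi_u$, the paper first lifts each $\rho_x$ to a representation $\pi_x$ of $C^*\triple$ and then verifies faithfulness of $\bigoplus_x\pi_x$ by an approximation estimate.
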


\begin{proof}
For each $x\in\falg\setminus\snideal$, we take a representation $\rho_x$ of $\triple$ such that $\|\rho_x\|=\vvvert x\vvvert$ as in Lemma \ref{lem:good.rep}. By Theorem \ref{thm:universal.property}, there exists a representation $\pi_x$ of $C^*\triple$ such that $\rho_x=\pi_x\circ i$. Define $\pi_u=\bigoplus_{x\in\falg\setminus\snideal}\pi_x$.

Notice that $\pi_u\circ i=\bigoplus_x\pi_x\circ i=\bigoplus_x \rho_x$, which is a representation of $\triple$, since the triple is admissible.

It remains to show that $\pi_u$ is faithful. For $a\in C^*\triple\setminus\{0\}$, let $x\in\falg\setminus\snideal$ be such that $\|a-i(x)\|\leq\|a\|/4$. An easy computation with the definition of $\| i(x)\|$ shows that $\vvvert x\vvvert\geq\|i(x)\|\geq\|a\|-\|a\|/4=3\|a\|/4$. Then
\[\|a-i(x)\|\geq\|\pi_u(a-i(x))\|\geq\|\pi_u(i(x))\|-\|\pi_u(a)\|\geq\|\pi_x(i(x))\|-\|\pi_u(a)\|=\]
\[\|\rho_x(x)\|-\|\pi_u(a)\|=\vvvert x\vvvert-\|\pi_u(a)\|,\]
which implies that
\[\|\pi_u(a)\|\geq \vvvert x\vvvert - \|a-i(x)\|\geq 3\|a\|/4-\|a\|/4=\|a\|/2>0.\]
Hence $\pi_u(a)\neq 0$, and since $a\neq 0$ was arbitrary, $\pi_u$ is faithful.
\end{proof}

As we will see at the end of Section \ref{sec:cuntz}, contrary to what happens if we only have norm relations, for a representation $\pi:C^*\triple\to B(H)$, even if it is faithful, it is not always true that $\pi\circ i$ is a representation of $\triple$.

\section{Examples}\label{sec:Examples}

Although some examples fall in a broader class of algebras, we decided to use some simpler examples to give all the details of the construction. The main tool will be interpreting an infinite sum as a net that converges to zero in the strong operator topology.

As it is implicitly done in \cite{MR813640}, equality relations involving only finite sums and finite products can be written as a pair $(x,0)$ for some $x\in\falg$. For instance, if you want to say that a certain generator $S$ is a partial isometry, that is $SS^*S=S$, we can give the relation $(SS^*S-S,0)$.

The following lemmas will be useful to study our examples.

\begin{lemma}\label{lem:positives.converging.to.zero}
If $(P_i)_{i\in I}$ is a net of positive operators on a Hilbert space $H$ such that $P_i\leq P_j$ if $i\leq j$ and $s-\lim_{i\to\infty}P_i=0$, then $P_i=0$ for all $i\in I$.
\end{lemma}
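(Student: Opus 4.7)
The plan is to exploit the interaction between monotonicity of the net and pointwise (i.e.\ strong) convergence to zero. The key observation is that if the net is increasing, then each $P_{i_0}$ is a lower bound for all later terms, yet these later terms are forced to be arbitrarily small in a pointwise sense; positivity then pins $P_{i_0}$ at zero.

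Concretely, I would fix $i_0 \in I$ arbitrary and aim to show $P_{i_0}=0$. For every $j\in I$ with $j\geq i_0$, monotonicity gives $P_{i_0}\leq P_j$, so for any vector $\xi\in H$
\[
0\leq\langle P_{i_0}\xi,\xi\rangle\leq\langle P_j\xi,\xi\rangle\leq\|P_j\xi\|\,\|\xi\|,
\]
where the last inequality is Cauchy--Schwarz. Since $I$ is directed, the subnet indexed by $\{j\in I: j\geq i_0\}$ is cofinal in $I$, so $s-\lim_{j\to\infty}P_j=0$ still holds along this subnet, giving $\|P_j\xi\|\to 0$. Passing to the limit in the displayed inequality yields $\langle P_{i_0}\xi,\xi\rangle=0$ for every $\xi\in H$.

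To finish, I would invoke that a positive operator is determined by its quadratic form: $\langle P_{i_0}\xi,\xi\rangle=0$ for all $\xi$ together with $P_{i_0}\geq 0$ (hence selfadjoint) implies $P_{i_0}=0$. As $i_0$ was arbitrary, every $P_i$ vanishes.

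There is no real obstacle here; the only subtlety worth flagging is that one must use the cofinality argument (rather than trying to fix $j$ and send $i\to\infty$) so that the strong limit hypothesis applies to the ``tail beyond $i_0$,'' and one needs positivity both to sandwich the quadratic form from below by $0$ and to conclude vanishing of the operator from vanishing of its quadratic form.
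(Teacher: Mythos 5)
Your proof is correct and follows essentially the same route as the paper's: both reduce to the scalar net $(\langle P_i\xi,\xi\rangle)_{i\in I}$, use monotonicity to force each term below the (zero) limit, and conclude $P_{i_0}=0$ from positivity and the vanishing of the quadratic form. The paper simply phrases the cofinality step as the observation that a non-decreasing net of non-negative reals converging to zero must be identically zero.
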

\begin{proof}
Let $\xi\in H$ be given and notice that $(\langle P_i\xi,\xi\rangle)_{i\in I}$ is a non-decreasing net of non-negative real numbers that converges to zero. This implies that $\langle P_i\xi,\xi\rangle = 0$ for all $i\in I$. Since $\xi$ is arbitrary and for each $i\in I$, $P_i$ is a positive operator, then $P_i=0$ for all $i\in I$.
\end{proof}
\begin{lemma}\label{lem:infinite.sum.projections}
Let $(P_i)_{i\in I}$ be a net of projections on a Hilbert space $H$ such that $\sum_{i\in I}P_i=1$ with respect to strong operator topology. Then $P_i P_j=0$ whenever $i\neq j$.
\end{lemma}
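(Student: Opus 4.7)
My plan is to interpret $\sum_{i\in I} P_i = 1$ as saying that the net of finite partial sums $Q_F := \sum_{i\in F} P_i$, indexed by finite subsets $F\scj I$ ordered by inclusion, converges to $1$ in the strong operator topology. The strategy is then to extract the inequality $P_i + P_j \le 1$ for any two distinct indices $i,j$, and use it to force $P_jP_i = 0$ via a positivity argument.

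First I would fix $i\neq j$ and set $F_0=\{i,j\}$, so $Q_{F_0}=P_i+P_j$. For any finite $F\supseteq F_0$, the difference $Q_F - Q_{F_0}=\sum_{k\in F\setminus F_0}P_k$ is a sum of projections, hence a positive operator, so $Q_{F_0}\le Q_F$. Taking strong limits along $F$, and using that if $A_F\to A$ strongly with each $A_F$ self-adjoint and $B\ge A_F$ then $B\ge A$ (which follows from continuity of $\xi\mapsto \langle(B-A_F)\xi,\xi\rangle \ge 0$), I obtain $P_i+P_j=Q_{F_0}\le 1$.

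Next I would compress the inequality $1-P_i-P_j\ge 0$ by $P_i$ on both sides to get
\[
P_i(1-P_i-P_j)P_i \;=\; P_i - P_i - P_iP_jP_i \;=\; -P_iP_jP_i \;\ge\; 0,
\]
i.e.\ $P_iP_jP_i\le 0$. On the other hand $P_iP_jP_i=(P_jP_i)^*(P_jP_i)\ge 0$. Combining these two inequalities gives $P_iP_jP_i=0$, hence $(P_jP_i)^*(P_jP_i)=0$ and therefore $P_jP_i=0$. Taking adjoints yields $P_iP_j=0$, completing the argument.

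The only mildly delicate point is the preservation of the order relation $Q_{F_0}\le Q_F$ under strong limits, but this is immediate from testing against vectors. Once $P_i+P_j\le 1$ is in hand, the rest is a standard positivity trick and requires no use of the convergence hypothesis beyond this finite inequality; in particular, Lemma \ref{lem:positives.converging.to.zero} is not directly needed here, although the style of reasoning (positivity plus SOT-monotonicity) is parallel.
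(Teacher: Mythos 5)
Your proof is correct, but it takes a genuinely different route from the paper's. The paper fixes $i_0$, passes to the subnet of partial sums indexed by finite sets containing $i_0$, multiplies by $P_{i_0}$ to obtain a non-decreasing net of positive operators converging strongly to zero, and then invokes Lemma \ref{lem:positives.converging.to.zero} to conclude that every finite tail $\sum_{i\in X\setminus\{i_0\}}P_{i_0}P_i$ vanishes, finally specializing to $X=\{i_0,j_0\}$. You instead extract from the hypothesis only the single operator inequality $P_i+P_j\leq 1$ (via the standard fact that SOT limits of self-adjoint operators preserve order, checked by testing against vectors) and finish algebraically: $-P_iP_jP_i=P_i(1-P_i-P_j)P_i\geq 0$ while $P_iP_jP_i=(P_jP_i)^*(P_jP_i)\geq 0$, forcing $P_jP_i=0$ and hence $P_iP_j=0$. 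Your version is more self-contained, since it bypasses Lemma \ref{lem:positives.converging.to.zero} entirely, and it uses strictly less than the hypothesis: only that the finite partial sums are dominated by $1$, not that they converge to $1$. It also sidesteps a small imprecision in the paper's argument --- the individual products $P_{i_0}P_i$ are not self-adjoint in general, so the claim that the one-sided tails form a net of positive operators really requires compressing by $P_{i_0}$ on both sides --- whereas your two-sided compression $P_i(\,\cdot\,)P_i$ handles positivity cleanly from the start. The paper's approach does deliver the slightly stronger intermediate fact that all finite tails vanish at once, but for the stated conclusion your argument is the more economical of the two.
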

\begin{proof}
Let $\mathcal{P}_0(I)$ the set of finite subsets of $I$. Translating the infinite sum $\sum_{i\in I}P_i=1$ to a net, we obtain the net $(\sum_{i\in X}P_i)_{X\in\mathcal{P}_0(I)}$ which converges to 1 with respect to strong operator topology. Fix $i_0\in I$ and observe that the subnet $(\sum_{i\in X}P_i)_{i_0\in X \in\mathcal{P}_0(I)}$ also converges to 1 with respect to strong operator topology. By multiplying by $P_{i_0}$ on the left, we conclude that the net $(\sum_{i\in X\setminus\{i_0\}}P_{i_0}P_i)_{i_0\in X \in\mathcal{P}_0(I)}$ is non-decreasing, of positive operators and converges to $0$ with respect to strong operator topology. By Lemma \ref{lem:positives.converging.to.zero}, $\sum_{i\in X\setminus\{i_0\}}P_{i_0}P_i=0$ for every $X\in\mathcal{P}_0(I)$ such that $i_0\in X$. Fix $j_0\in I$ such that $j_0\neq i_0$ and consider $X=\{i_0,j_0\}$. For this choice of $X$, we obtain $P_{i_0}P_{j_0}=0$. Since $i_0$ and $j_0$ are arbitrary, the result follows.
\end{proof}

\begin{lemma}\label{lem:conv.zero.basis}
Let $H$ be a Hilbert space with orthonormal basis $\{e_{\lambda}\}_{\lambda\in\Lambda}$ and let $\{T_i\}_{i\in I}$ be a bounded net in $B(H)$. If $\lim_{i\to\infty}T_i(e_{\lambda})=0$ for all $\lambda\in\Lambda$, then $s-\lim_{i\to\infty}T_i=0$.
\end{lemma}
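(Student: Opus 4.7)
The plan is a standard three-epsilon density argument, exploiting both the boundedness of the net and the fact that finite linear combinations of the basis vectors are dense in $H$. Set $M=\sup_{i\in I}\|T_i\|$, which is finite by hypothesis. Fix an arbitrary $\xi\in H$ and $\varepsilon>0$; the goal is to produce $i_0\in I$ such that $\|T_i\xi\|<\varepsilon$ for all $i\geq i_0$.

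First I would approximate $\xi$ by a finite linear combination $\eta=\sum_{\lambda\in F}c_\lambda e_\lambda$ (with $F\subseteq\Lambda$ finite) satisfying $\|\xi-\eta\|<\varepsilon/(2(M+1))$; this is possible because $\{e_\lambda\}_{\lambda\in\Lambda}$ is an orthonormal basis, so finite partial sums of the Fourier expansion of $\xi$ converge to $\xi$ in norm. Then, using the pointwise hypothesis $\lim_i T_i(e_\lambda)=0$, for each $\lambda\in F$ I would choose $i_\lambda\in I$ so that $\|T_i(e_\lambda)\|<\varepsilon/(2(|F|+1)(|c_\lambda|+1))$ whenever $i\geq i_\lambda$. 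Since $F$ is finite and $I$ is a directed set, I can pick $i_0\in I$ with $i_0\geq i_\lambda$ for every $\lambda\in F$.

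For $i\geq i_0$, the triangle inequality gives
\[\|T_i\xi\|\leq \|T_i(\xi-\eta)\|+\|T_i\eta\|\leq M\|\xi-\eta\|+\sum_{\lambda\in F}|c_\lambda|\,\|T_i(e_\lambda)\|<\frac{\varepsilon}{2}+\frac{\varepsilon}{2}=\varepsilon.\]
This shows $\lim_{i\to\infty}T_i\xi=0$, and since $\xi\in H$ was arbitrary, we conclude $s\text{-}\lim_{i\to\infty}T_i=0$.

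I do not anticipate any real obstacle here: the only subtlety worth flagging is that we are working with nets rather than sequences, so it is important that $I$ is directed in order to dominate the finitely many indices $\{i_\lambda:\lambda\in F\}$ by a single $i_0$. The boundedness assumption is used exactly once, to control the contribution from the tail $\xi-\eta$ uniformly in $i$; without it the argument would fail, which is consistent with the fact that pointwise convergence on a dense subset together with a uniform norm bound is the classical criterion for strong operator convergence.
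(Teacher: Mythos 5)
Your proof is correct and follows essentially the same route as the paper's: approximate $\xi$ by a finite linear combination of basis vectors, use the uniform bound $M$ to control the tail, and use directedness of $I$ to handle the finitely many basis vectors at once. The only cosmetic difference is that you make explicit the step of dominating the indices $\{i_\lambda\}_{\lambda\in F}$, which the paper absorbs into a preliminary remark that the hypothesis extends by linearity to all of $\spanop\{e_\lambda\}_{\lambda\in\Lambda}$.
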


\begin{proof}
By linearity and the continuity of operation on $H$, we can see that for every $h\in\spanop \{e_{\lambda}\}_{\lambda\in\Lambda}$, we have that $\lim_{i\to\infty}T_i(e_{\lambda})=0$. Fix $M>0$ such that $\|T_i\|\leq M$ for all $i\in I$ and let $h\in H$ be arbitrary. Given $\epsilon>0$, there exists $h_0\in \spanop\{e_{\lambda}\}_{\lambda\in\Lambda}$ such that $\|h-h_0\|\leq \epsilon/2M$. Also, there exists $i_0$ such that for all $i\geq i_0$, $\|T_i(h_0)\|<\epsilon/2$. Then, for all $i\geq i_0$, we have that
\[\|T_i(h)\|=\|(T_i(h-h_0)+T_i(h_0)\|\leq \|T_i\|\|h-h_0\|+\|T_i(h_0)\|<M\frac{\epsilon}{2M}+\frac{\epsilon}{2}=\epsilon.\]
The result follows.
\end{proof}

\subsection{Cuntz algebras}\label{sec:cuntz}
The class of algebras $\cuntz{n}$ studied by Cuntz \cite{MR467330}, when $2\leq n <\infty$, can be seen as the universal unital C*-algebra generated by $n$ isometries $S_1,\ldots,S_n$ such that
\begin{equation}\label{eq:cuntz}
    \sum_{i=1}^n S_iS_i^*=1.
\end{equation}
For $n=\infty$, Cuntz defined $\cuntz{\infty}$ as a C*-subalgebra of $B(H)$ generated by isometries $\{S_i\}_{i=1}^{\infty}$ on an infinite dimensional Hilbert space $H$ such that $\sum_{i=1}^n S_iS_i^*\leq 1$ for all $n$. He showed this algebra is simple so it does not depend on $H$ or the isometries. It known that we can see $\cuntz{\infty}$ as the universal unital C*-algebra generated by isometries $\{S_i\}_{i=1}^{\infty}$ such that $S_i^*S_j=0$, whenever $i\neq j$.

We would like to use \eqref{eq:cuntz} as a relation when $n=\infty$, however this cannot be true with respect to the norm topology, since in $\cuntz{\infty}$, for all $r$, we have that $1-\sum_{i=1}^r S_iS_i^*$ is a projection of norm 1. 

We can interpret \eqref{eq:cuntz} as a sum with respect to the strong operator topology. More specifically, we consider the triple $\triple$, where $\Gg=\{T_i\}_{i=1}^{\infty}\dot{\cup}\{1\}$, $\rel_N$ is the set of relations that say that $1$ is a unit and $T_i^*T_i=1$ for all $i$, and $\rel_S$ is the singleton set with element $(1-\sum_{i=1}^r T_iT_i^*)_{r=1}^{\infty}$. To see that the triple is admissible, we take a representation $\rho$ of $\triple$ and notice that the set $\{\rho(T_i)\}_{i=1}^{\infty}$ is composed of isometries so that $\rho(T_i)\rho(T_i)^*$ are projections such that $\sum_{i=1}^
{\infty}\rho(T_i)\rho(T_i)^*=1$ with respect to the strong operator topology. By Lemma \ref{lem:infinite.sum.projections}, the projections $\rho(T_i)\rho(T_i)^*$ are mutually orthogonal. This implies that $1-\sum_{i=1}^
{r}\rho(T_i)\rho(T_i)^*$ is a projection for all $r$, so its norm is bounded by $1$. By Proposition \ref{prop:admissibility}, $\triple$ is admissible.

To prove that $\cuntz{\infty}\cong C^*\triple$, notice that if $\rho_u$ is the faithful representation given by Theorem \ref{thm:faithful.rep}, then the above computations show that $\rho_u(T_i)^*\rho_u(T_j)=0$ if $i\neq j$, and hence $T_i^*T_j=0$ for $i\neq j$ inside $C^*\triple$. By the universal property of $\cuntz{\infty}$, there is a unital *-homomorphism $\varphi:\cuntz{\infty}\to C^*\triple$ such that $\varphi(S_i)=T_i$ for all $i$. We build an inverse for $\varphi$ using Theorem \ref{thm:universal.property}. Let $H$ be an infinite dimensional separable Hilbert space and decompose $H$ as $H=\bigoplus_{n=1}^{\infty}H_n$, where each $H_n$ is again an infinite dimensional separable Hilbert space. For each $i\in\nn$ with $i\geq 1$, let $R_i$ be an isometry from $H$ to $H_i$ seen as an element of $B(H)$. Clearly $\sum_{i=1}^n R_iR_i^*\leq 1$ for every $n\geq 1$, so that there exists a faithful (because $\cuntz{\infty}$ is simple) representation $\pi:\cuntz{\infty}\to B(H)$ such that $\pi(S_i)=R_i$. Also notice that $\sum_{i=1}^{\infty}R_iR_i^*=1$ with respect to the strong operator topology. By Remark \ref{rem:hom.from.generators}, there is a *-homomorphism $\psi:\falg\to\cuntz{\infty}$ such that $\psi(T_i)=S_i$. We then get that $\pi\circ\psi$ is a representation of $\triple$. The map $\widetilde{\psi}:C^*\triple\to\cuntz{\infty}$ given by Theorem \ref{thm:universal.property} is a unital *-homomorphism such that $\widetilde{\psi}(T_i)=S_i$ for all $i$, which then implies that $\widetilde{\psi}=\varphi^{-1}$.

Another way to prove that $\varphi$ is a *-isomorphism would be to observe that $\varphi$ is surjective, and since $\cuntz{\infty}$ is simple, it would suffice to show that $C^*\triple$ is not the algebra $\{0\}$.

We now show that there exists a faithful representation $\rho$ of $\cuntz{\infty}$ such that $\rho\circ\psi$ is not a representation of $\triple$. The construction is analogous to the construction of $\pi$ above. The difference is that we add an extra non-zero subspace $H_0$, in the direct sum, that is, $H=\bigoplus_{n=0}^{\infty}H_n$. Defining $R_i$ as above, we get a faithful representation $\rho:\cuntz{\infty}\to B(H)$. In this case, notice that $\sum_{i=1}^{\infty}R_iR_i^*$ converges with respect to the strong operator topology to the projection on the proper subspace $\bigoplus_{n=1}^{\infty}H_n$, and therefore $\rho\circ\psi$ is not a representation of $\triple$.

\subsection{Exel-Laca algebras} In \cite{MR561974}, Cuntz and Krieger defined an algebra $\cuntz{A}$ associated with a finite square matrix of zeros and ones $A$ and studied its relationship with topological Markov chains. They remarked that a similar construction could be done with infinite matrices as long as infinite sums are considered in the strong operator topology. In their paper, they defined the algebra already represented on a Hilbert space and showed that under some conditions on the matrix, the algebra is unique in the sense that it does not depend on the representation \cite[Theorem 2.13]{MR561974}.

When the matrix is finite, Blackadar described $\cuntz{A}$ as universal C*-algebra \cite[Example 1.3(e)(9)]{MR813640}. One of the main goals of \cite{MR1703078} was to define a universal C*-algebra of an infinite matrix of zeros and ones. Since they relied on Blackadar's construction, they couldn't use the same relations as Blackadar used since they would be dealing with infinite sums. They, then, had to find enough norm relations to describe the algebra in question. In this section, using the theory developed in Section \ref{sec:definition.universal.algebra}, we show that the algebra defined by Exel and Laca is indeed the same one obtained by allowing infinite sum relations in Blackadar's example.

\begin{definition}\cite[Definition 7.1]{MR1703078}
Given a set of indices $I$ and a 0-1 matrix $A=(A_{ij})_{i,j\in I}$ with no identically zero rows, the unital Exel-Laca algebra $\elu$ is the universal C*-algebra generated by a family of partial isometries $\{S_i\}_{i\in I}$ and a unit $1$ satisfying the following relations:
\begin{itemize}
    \item[EL1.] $S_i^*S_iS_j^*S_j=S_j^*S_jS_i^*S_i$, for all $i,j\in I$;
    \item[EL2.] $S_iS_i^*S_jS_j^*=0$, whenever $i\neq j$;
    \item[EL3.] $S_i^*S_iS_jS_j^*=A_{ij}S_jS_j^*$ for all $i,j\in I$;
    \item[EL4.] for all $X,Y\scj I$ finite such that
    \[A(X,Y,j):=\prod_{x\in X}A_{xj}\prod_{y\in Y}(1-A_{yj})\]
    is zero for all but a finite number of $j$'s, we have that,
    \[\prod_{x\in X}S_x^*S_x\prod_{y\in Y}(1-S_y^*S_y)=\sum_{j\in I} A(X,Y,j)S_jS_j^*.\]
\end{itemize}
\end{definition}

Let $I$ and $A$ be as in the above definition. Our next goal is to describe $\elu$ using infinite sum relations. As generators we set $\Gg=\{T_i\}_{i\in I}\dot{\cup}\{1\}$. The norm relations $\rel_N$ are the ones that say that $T_i$ is a partial isometry for each $i\in I$ and that $1$ is a unit. The SOT relations $\rel_S$ are given by:
\begin{itemize}
    \item[CK1.] $\sum_{i\in I} T_iT_i^*=1$;
    \item[CK2.] for each $i\in I$, $T_i^*T_i=\sum_{j\in I}A_{ij}T_jT_j^*$,
\end{itemize}
seen as nets in the natural way. This way, we get a generating triple $\triple$. To prove that this triple is admissible, we use Proposition \ref{prop:admissibility} and find constants $\eta_{\mathbf{x}}$ that bound the relations CK1 and CK2. As in the example of the Cuntz algebra, we see that the first is bounded by $1$ and that $\{\rho(T_i)\rho(T_i)^*\}_{i\in I}$ is a set of mutually orthogonal projections for every representation $\rho$ of $\triple$. This implies that the relations on CK2 are all bounded by $2$.

To build a *-homomorphism from $\elu$ to $C^*\triple$, we use a faithful representation $\pi$ of $C^*\triple$ on a Hilbert space $H$ as in Theorem \ref{thm:faithful.rep}. In order to simplify the notation, we will use $T_i$ instead of $\pi(T_i)$ in the computations below. As mentioned in the previous paragraph, EL2 holds for the $T_i$'s. To prove the remaining relations, take $h\in H$. By CK2, for $i,j\in I$,
\[T_i^*T_iT_j^*T_j(h)=\lim_{X\to\infty}\lim_{Y\to\infty}\sum_{x\in X}\sum_{y\in Y}A_{ix}A_{jy}T_xT_x^*T_yT_y^*(h)=\lim_{X\to\infty}\sum_{x\in X}A_{ix}A_{jx}T_xT_x^*(h),\]
where the limits are taken over the directed set of all finite subsets of $I$ and the last equality holds by EL2. Since $A_{ix}$ and $A_{jx}$ are numbers, we see that the same final expression is valid for $T_j^*T_jT_i^*T_i(h)$. It follows that $T_i^*T_iT_j^*T_j=T_j^*T_jT_i^*T_i$, which is EL1. Next, we prove EL3. For $i,j\in I$,
\[T_i^*T_iT_jT_j^*(h)=\lim_{X\to\infty} \sum_{x\in X} A_{ix}T_xT_x^*T_jT_j^*(h)=A_{ij}T_jT_j^*,\]
where the first equality is due to CK2 and the last equality follows from EL2. Finally, for EL4, let $X$, $Y$ and $J$ be finite subsets of $I$ such that $A(X,Y,j)=0$ for all $j\in I\setminus J$, and $A(X,Y,j)\neq 0$ for all $j\in J$. A similar argument as above shows that
\[\prod_{x\in X}T_x^*T_x(h)=\lim_{R\to\infty}\sum_{r\in R}\left(\prod_{x\in X}A_{xr}\right) T_rT_r^*(h),\]
and also, using CK1, that
\[\prod_{y\in Y}(1-T_y^*T_y)(h)=\lim_{S\to\infty}\sum_{s\in S}\left(\prod_{y\in Y}(1-A_{ys})\right) T_sT_s^*(h).\]
Then, by CK2 and EL2, we have that
\[\prod_{x\in X}T_x^*T_x\prod_{y\in Y}(1-T_y^*T_y)(h)=\]
\[\lim_{R\to\infty}\lim_{S\to\infty}\sum_
{r\in R}\sum_{s\in S}\left(\prod_{x\in X}A_{xr}\right)\left(\prod_{y\in Y}(1-A_{ys})\right)T_rT_r^*T_sT_s^*(h)=\]
\[\lim_{R\to\infty}\sum_
{r\in R}\left(\prod_{x\in X}A_{xr}\right)\left(\prod_{y\in Y}(1-A_{yr})\right)T_rT_r^*(h)=\]
\[\lim_{R\to \infty}\sum_{r\in R}A(X,Y,r)T_rT_r^*(h)=\sum_{j\in J}A(X,Y,j)T_jT_j^*(h)=\sum_{j\in I}A(X,Y,j)T_jT_j^*(h).\]

By the universal property of $\elu$, we have a unital *-homomorphism $\varphi:\elu\to C^*\triple$ such that $\varphi(S_i)=T_i$.

To find the inverse, we recall a faithful representation of $\elu$ given in \cite[Section~9]{MR1703078}. We define $P_A=\{(i_n)_{n\in\nn}\in I^{\nn}\mid A_{i_ni_{n+1}}=1,\text{ for all }n\in\nn\}$. The hypothesis that $A$ has no identically zero rows implies that $P_A$ is non-empty. Considering $\ell^2(P_A)$ be the Hilbert space with its canonical basis $\{\varepsilon_{\mu}\}_{\mu\in P_A}$, we define the operators $L_i:\ell^2(P_A)\to\ell^2(P_A)$ for each $i\in I$ by
\[L_i(\varepsilon_{\mu})=\begin{cases}
\varepsilon_{i\mu} & \text{if }A_{i\mu_0}=1, \\
0 & \text{if }A_{i\mu_0}=0,
\end{cases}
\]
where $\mu=\mu_0\mu_1\ldots\in P_A$, and notice that
\[L_i^*\varepsilon_{\mu}=\begin{cases}
\varepsilon_{\mu_1\mu_2\ldots} & \text{if }\mu_0=i, \\
0 & \text{if }\mu_0\neq i.
\end{cases}\]
Therefore, $L_iL_i^*$ is the projection on the space $\overline{\spanop}\{\varepsilon_{\mu}\mid \mu_0=i\}$, and $L_i^*L_i$ is the projection on the space $\overline{\spanop}\{\varepsilon_{\mu}\mid A_{i\mu_0}=1\}$.

Also let $\fr$ be the free group generated by $I$, with regular left representation $\lambda$ of $\fr$ on $\ell^2(\fr)$, whose canonical basis is denoted by $\{\delta_g\}_{g\in\fr}$. By \cite[Proposition 9.1]{MR1703078}, there is a unique faithful representation $\rho:\elu\to B(\ell^2(P_A)\otimes \ell^2(\fr))$ such that $\rho(S_i)=L_i\otimes\lambda_i$ for all $i\in I$.

We want to use Theorem \ref{thm:universal.property} in order to find a *-homomorphism from $C^*\triple$ to $\elu$. For that, it suffices that $\{L_i\otimes\lambda_i\}_{i\in I}\cup \{Id\}$ forms a representation of $\triple$ (see Remark \ref{rem:hom.from.generators}). The norm relations are satisfied since $\rho$ is a representation of $\elu$. We check the SOT relations. For that take $\mu\in P_A$ and $g\in\fr$, and notice that
\[(L_i\otimes\lambda_i)(L_i\otimes\lambda_i)^*(\varepsilon_{\mu}\otimes\delta_g)=L_iL_i^*(\varepsilon_{\mu})\otimes \delta_g,\]
and that
\[(L_i\otimes\lambda_i)^*(L_i\otimes\lambda_i)(\varepsilon_{\mu}\otimes\delta_g)=L_i^*L_i(\varepsilon_{\mu})\otimes \delta_g.\]

Since there is a unique $i\in I$ such that $\mu_0=i$, we have that
\[\sum_{i\in I}(L_i\otimes\lambda_i)(L_i\otimes\lambda_i)^*(\varepsilon_{\mu}\otimes\delta_g)=\varepsilon_{\mu}\otimes\delta_g\]
and, by Lemma \ref{lem:conv.zero.basis}, CK1 follows.

Noticing that $\overline{\spanop}\{\varepsilon_{\mu}\mid A_{i\mu_0}=1\}=\bigoplus_{j:A_{ij}=1}\overline{\spanop}\{\varepsilon_{\mu}\mid \mu=j\}$, we conclude that
\[(L_i\otimes\lambda_i)^*(L_i\otimes\lambda_i)(\varepsilon_{\mu}\otimes\delta_g)=\sum_{j:A_{ij}=1}L_jL_j^*(\varepsilon_{\mu})\otimes \delta_g=\sum_{j\in I}A_{ij}(L_j\otimes\lambda_j)(L_j\otimes\lambda_j)^*(\varepsilon_{\mu}\otimes\delta_g),\]
and hence, by Lemma \ref{lem:conv.zero.basis}, CK2 holds.

By Theorem \ref{thm:universal.property}, there exists an unital *-homomorphism $\psi:C^*\triple\to\elu$ such that $\psi(T_i)=S_i$. By the universality of these algebras, we have then that $\psi=\varphi^{-1}$.

We now consider the not necessarily unital version of Exel-Laca algebras.

\begin{definition}\cite[Definition 8.1]{MR1703078}
Given a set of indices $I$ and a 0-1 matrix $A=(A_{ij})_{i,j\in I}$ with no identically zero rows, the Exel-Laca algebra $\cuntz{A}$ is the C*-subalgebra of $\elu$ generated by $\{S_i\}_{i\in I}$.
\end{definition}

Notice that the above definition for $\cuntz{A}$ is not as a universal C*-algebra. In order to do that, we have to observe that the relation EL4 uses a unit, but $\cuntz{A}$ is not necessarily unital. We can rewrite EL4 without the use unit whenever $X\neq\emptyset$. In fact, Exel and Laca showed in \cite[Proposition 8.5]{MR1703078} that there exists $Y\scj I$ finite such that $A(\emptyset,Y,j)=0$ for all but a finitely many $j$'s if and only if $\cuntz{A}$ is unital, in which case $\cuntz{A}=\elu$. So we have to consider two cases in order to describe $\cuntz{A}$ as a universal C*-algebra. If there exists $Y\scj I$ finite such that $A(\emptyset,Y,j)=0$ for all but a finitely many $j$'s, then $\cuntz{A}$ is the unital C*-algebra generated by a family of partial isometries $\{S_i\}_{i\in I}$ satisfying EL1-EL4. If there is no such $Y$, then we rewrite EL4 without the use of unit and $\cuntz{A}$ is the universal C*-algebra generated by a family of partial isometries $\{S_i\}_{i\in I}$ satisfying E1-E4.

By allowing infinite sums we are able to describe $\cuntz{A}$ as universal C*-algebra in a unified way. We just need to change CK1 in order to avoid the need for a unit. If we check our computations as well as the example of the Cuntz algebra, we see that we can replace CK1 by the relations $T_i^*T_j=0$, whenever $i\neq j$. Similarly to what we have done for $\elu$ above, we can show that $\cuntz{A}$ is isomorphic to the universal C*-algebra generated by a family of partial isometries $\{T_i\}_{i\in I}$ with mutually orthogonal final projections such that
\begin{itemize}
    \item[CK.] for each $i\in I$, $T_i^*T_i=\sum_{j\in I}A_{ij}T_jT_j^*$.
\end{itemize}

In the case that there exists $Y\scj I$ finite such that $A(\emptyset,Y,j)=0$ for all but a finitely many $j$'s, we should be able to find a unit in our version of $\cuntz{A}$. Indeed, supposing the existence of such $Y$, consider the sets $J_1=\{j\in I\mid A(\emptyset,Y,j)=0\}$ and $J_2=\{j\in I\mid A(\emptyset,Y,j)\neq 0\}$, so that $J_2$ is finite. Notice that $A(\emptyset,Y,j)=0$ if and only if there exists $y\in Y$ such that $A_{yj}=1$. We consider a faithful representation $\rho_u$ of $\cuntz{A}$ as in Theorem \ref{thm:faithful.rep}, and to simplify the notation we use the isomorphism $\cuntz{A}\cong \rho_u(\cuntz{A})$ as a equality. Using the inclusion-exclusion principle and CK, we see that, with respect to the strong operator topology,
\[\sum_{\emptyset\neq Z\scj Y}(-1)^{|Z|+1}\prod_{z\in Z}T_z^*T_z=\sum_{j\in J_1}T_jT_j^*.\]
Notice that the left hand side is an element of $\cuntz{A}$, and if we define
\[U=\sum_{\emptyset\neq Z\scj Y}(-1)^{|Z|+1}\prod_{z\in Z}T_z^*T_z+\sum_{j\in J_2}T_jT_j^*,\]
then $U\in\cuntz{A}$ and $UT_i=T_i=T_iU$ for all $i\in I$, and hence $U$ is a unit for $\cuntz{A}$.

\section{C*-algebras generated by projections and partial isometries}\label{sec:algebra.partial.isometry}

In this section, we prove that under some conditions a C*-algebra generated by projections and partial isometries can be described as a universal C*-algebra using only norm relations and generators corresponding to the projections and partial isometries. This means that we can define a C*-algebra using SOT relations or any other means, and if we prove the conditions of Theorem \ref{lem:commuting.projections.universality} or Theorem \ref{thm:alg.gen.proj.part.iso}, we know that there is a set of norm relations describing the same C*-algebra.

We start by studying commutative C*-algebras generated by projections seen as themselves or as a C*-subalgebra of a not necessarily commutative C*-algebra.

\begin{lemma}\label{lem:set.of.orthogonal.projections}
Let $A$ be a C*-algebra and $Q=\{q_1,q_2,\ldots,q_n\}\subseteq A$ be a subset of mutually commuting projections. Then the set
$$P=\left\{\prod_{j\in X}q_j\prod_{k\in \{1,\ldots,n\}\backslash X}(1-q_k)\,\biggl |\, \emptyset \neq X\subseteq \{1,\ldots,n\}\right\}$$
is formed by projections that are mutually orthogonal and $\spanop P$ is the C*-subalgebra of $A$ generated by $Q$. 
\end{lemma}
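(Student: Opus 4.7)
The plan is to verify the three assertions separately: (i) each element of $P$ is a projection, (ii) distinct elements of $P$ are orthogonal, and (iii) $\spanop P$ coincides with $C^*(Q)$, the C*-subalgebra generated by $Q$. To handle the expressions $1-q_k$ when $A$ is non-unital, I would work inside the unitization $\widetilde A$; the crucial observation is that any element $p_X = \prod_{j\in X}q_j\prod_{k\notin X}(1-q_k)$ with $X\neq\emptyset$ contains at least one factor $q_{j_0}$, so $p_X = q_{j_0}(q_{j_0} p_X) \in A$. Each $1-q_k$ is a projection in $\widetilde A$ commuting with all $q_j$'s (since the $q_j$'s commute among themselves), so $p_X$ is a product of mutually commuting self-adjoint idempotents, hence itself a self-adjoint idempotent; this gives (i).

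For (ii), given $X\neq Y$ in the indexing set, pick $j$ in the symmetric difference, say $j\in X\setminus Y$. Then $p_X$ has a factor $q_j$ and $p_Y$ has a factor $(1-q_j)$; by commutativity I can regroup $p_X p_Y$ to isolate the product $q_j(1-q_j)=0$, so $p_X p_Y = 0$. Combined with (i), this shows that $\spanop P$ is already a *-subalgebra of $A$ closed under multiplication, and being finite-dimensional it is automatically norm-closed.

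For (iii), the key identity is the ``partition of unity'' expansion in $\widetilde A$:
\[
1 = \prod_{k=1}^{n}\bigl(q_k+(1-q_k)\bigr) = \sum_{X\scj\{1,\ldots,n\}} p_X,
\]
where $p_\emptyset=\prod_k(1-q_k)$ lies in $\widetilde A$ but not necessarily in $A$. Multiplying this identity by $\prod_{j\in S}q_j$ for any nonempty $S$ and using $q_j(1-q_j)=0$ to kill every $p_X$ with $S\not\scj X$ yields $\prod_{j\in S}q_j=\sum_{X\supseteq S}p_X\in\spanop P$. In particular each generator $q_j$ lies in $\spanop P$, so $C^*(Q)\scj\spanop P$. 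The reverse inclusion is immediate by expanding the $(1-q_k)$ factors in each $p_X$ (the $X\neq\emptyset$ hypothesis ensures the expansion yields a polynomial in $Q$ with no constant term, hence an element of the *-algebra generated by $Q$). I do not foresee any real obstacle; the only mild subtlety is the non-unital case, handled uniformly by the unitization trick together with the nonemptiness of $X$.
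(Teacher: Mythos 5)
Your proposal is correct and follows essentially the same route as the paper's proof: orthogonality via an index in the symmetric difference producing a factor $q_m(1-q_m)=0$, closedness of $\spanop P$ from finite dimensionality, and recovery of the generators by summing the $p_X$ over sets containing a given index (your partition-of-unity identity specializes to the paper's formula $q_i=\sum_{X\ni i}p_X$). The only difference is that you are more explicit about the non-unital case via the unitization, which the paper dispatches with a one-line remark that the nonemptiness of $X$ lets one write $p_X$ without the unit.
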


\begin{proof}
Before starting the proof, we note that the elements in $P$ can be written without using the unit because $X\neq\emptyset$. Clearly, the elements in $P$ are projections. Let $X_1$ and $X_2$ be non-empty subsets of $\{1,\ldots,n\}$ such that $X_1\neq X_2$. Without loss of generality, consider $m\in\{1,\ldots,n\}$ such that $m\in X_1$ e $m\notin X_2$. Then the projection 
$$\prod_{j\in X_1}q_j\prod_{k\in \{1,\ldots,n\}\backslash X_1}(1-q_k)$$
contains the factor $q_m$ and the projection
$$\prod_{j\in X_2}q_j\prod_{k\in \{1,\ldots,n\}\backslash X_2}(1-q_k)$$
contains the factor $1-q_m$. Since the projections $q_i$ commute with each other, then 
$$\left(\prod_{j\in X_1}q_j\prod_{k\in \{1,\ldots,n\}\backslash X_1}(1-q_k)\right)\left(\prod_{j\in X_2}q_j\prod_{k\in \{1,\ldots,n\}\backslash X_2}(1-q_k)\right)=0,$$
showing that the projections in $P$ are mutually orthogonal. From this, it is easy to see that $\spanop\,P$ is a C*-subalgebra of $A$, since it is closed under the operations and it is finite dimensional. Furthermore, for every $i\in\{1,\ldots,n\}$,
$$q_i=\sum_{\stackrel{X\subseteq\{1,\ldots,n\}}{i\in X}}\left(\prod_{j\in X}q_j\prod_{k\in \{1,\ldots,n\}\backslash X}(1-q_k)\right) \in \spanop P,$$
which says that $C^*(Q)\subseteq \spanop P$. Since that it is clear that $\spanop P\subseteq C^*(Q)$, the result follows.
\end{proof}

\begin{theorem}\label{lem:commuting.projections.universality}
Let $A$ be a commutative C*-algebra which is generated by a family of (mutually commuting) projections $\{q_i\}_{i\in I}$. Denote by $\rel_0$ the set of all finite algebraic relations in $A$ involving the generators in $\{q_i\}_{i\in I}$, i.e., $\rel_0$ is the set of all relations of the form
$$\sum_m^{\mathrm{finite}}\lambda_m\prod_n^{\mathrm{finite}}q_{i_{mn}}=0$$
that are satisfied in $A$. Consider a set $\{p_i\}_{i\in I}$ with the same cardinality of $I$ and define
$$\rel_1 = \{p_i=p_i^*=p_i^2\,|\,i\in I\}\cup\{p_{i_1}p_{i_2}=p_{i_2}p_{i_1}\,|\,i_1,i_2\in I\}$$
and
$$\rel_2=\left\{\sum_m\lambda_m\prod_np_{i_{mn}}=0 \,\biggl|\, \sum_m\lambda_m\prod_nq_{i_{mn}}=0 \in \rel_0\right\}.$$
Then $C^*(\{p_i\}_{i\in I}, \rel_1\cup\rel_2)$ is *-isomorphic to $A$ by the map $p_i\mapsto q_i$.
\end{theorem}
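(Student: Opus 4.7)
The plan is to apply Theorem \ref{thm:universal.property} to produce a *-homomorphism $\varphi: C^*(\{p_i\}_{i\in I}, \rel_1\cup\rel_2)\to A$ sending $p_i\mapsto q_i$, and then to show it is a *-isomorphism. Admissibility of the triple $(\{p_i\},\rel_1\cup\rel_2,\emptyset)$ follows from Proposition \ref{prop:admissibility}, since $\rel_1$ forces every $\rho(p_i)$ to be a projection of norm at most $1$. Fixing a faithful representation $\pi:A\to B(H)$, the operators $\pi(q_i)$ are mutually commuting projections, so they satisfy $\rel_1$, and they satisfy $\rel_2$ by the very definition of $\rel_0$; thus the *-homomorphism $\falg\to A$ determined by $p_i\mapsto q_i$, composed with $\pi$, is a representation of the triple, and Theorem \ref{thm:universal.property} yields $\varphi$. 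Surjectivity of $\varphi$ is immediate since its image contains the generators of $A$.

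Next, I would establish injectivity on a dense *-subalgebra. Let $B_0$ be the *-subalgebra of $C^*(\{p_i\},\rel_1\cup\rel_2)$ generated by $\{p_i\}_{i\in I}$, and $A_0\scj A$ the *-subalgebra generated by $\{q_i\}_{i\in I}$; both are dense in their respective C*-algebras and $\varphi(B_0)=A_0$. Since $p_i=p_i^*$, every element of $B_0$ can be written as $x=\sum_m\lambda_m\prod_np_{i_{mn}}$, and if $\varphi(x)=\sum_m\lambda_m\prod_nq_{i_{mn}}=0$ in $A$, then this equation belongs to $\rel_0$ by definition, so the corresponding identity in the $p_i$'s is imposed by $\rel_2$ and $x=0$ in the universal algebra. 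Hence $\varphi|_{B_0}:B_0\to A_0$ is a *-algebra isomorphism.

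The hardest step will be upgrading this algebraic isomorphism into an isometry, since $\varphi|_{B_0}$ being injective does not by itself imply that $\varphi$ is injective on the completion. My approach is to filter by finite subsets: for each finite $F\scj I$, let $B_F$ and $A_F$ be the C*-subalgebras of $C^*(\{p_i\},\rel_1\cup\rel_2)$ and $A$ generated by $\{p_i\}_{i\in F}$ and $\{q_i\}_{i\in F}$ respectively. Lemma \ref{lem:set.of.orthogonal.projections} shows both $B_F$ and $A_F$ are finite-dimensional, being spanned by their families of atomic projections. The restriction $\varphi|_{B_F}:B_F\to A_F$ is a surjective *-homomorphism between finite-dimensional C*-algebras, and is injective by the previous paragraph, hence a *-isomorphism, hence automatically isometric. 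Since every element of $B_0$ lies in some $B_F$ and a C*-subalgebra carries the inherited norm, $\varphi|_{B_0}$ is isometric; as $B_0$ is dense in the universal algebra, $\varphi$ is isometric throughout, and therefore a *-isomorphism.
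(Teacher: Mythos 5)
Your proposal is correct and follows essentially the same route as the paper: the universal property gives the surjection $p_i\mapsto q_i$, the relations in $\rel_2$ handle the algebraic identification of the dense *-subalgebras, and Lemma \ref{lem:set.of.orthogonal.projections} reduces the norm comparison to finite-dimensional C*-subalgebras generated by finitely many commuting projections. The only (cosmetic) difference is the direction of the norm estimate: you show the forward map is isometric on the dense subalgebra via injectivity on each finite-dimensional piece, whereas the paper defines the inverse on the dense subalgebra of $A$ and shows it is contractive there.
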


\begin{proof}
Since the relations in $\rel_1$ make each $p_i$ a projection, then the pair $(\{p_i\}_{i\in I}, \rel_1\cup\rel_2)$ is admissible and, since relations in $\rel_1\cup\rel_2$ are satisfied in $A$ (replacing $p_i$ by $q_i$), then there is a *-homomorphism $\phi:C^*(\{p_i\}_{i\in I}, \rel_1\cup\rel_2)\to A$ given by $\phi(p_i)=q_i$. Denote by $A_0$ the $*$-subalgebra of $A$ generated by $\{q_i\}_{i\in I}$, i.e., $A_0$ is the $\cn$-span of finite products of $q_i$'s, and consider the map $\psi_0: A_0\to C^*(\{p_i\}_{i\in I}, \rel_1\cup\rel_2)$ given by $\psi_0(q_i)=p_i$ in the generators and extended to $A_0$ in such way that it is a *-homomorphism. We need to verify that $\psi_0$ is well defined. To see this, consider $x\in A_0$ such that
$$x=\underbrace{\sum_j\lambda_j\prod_kq_{i_{jk}}}_{x_1}=\underbrace{\sum_u\mu_u\prod_vq_{i_{uv}}}_{x_2}.$$
Since $x_1-x_2=0\in\rel_0$, then
$$\sum_j\lambda_j\prod_kp_{i_{jk}} - \sum_u\mu_u\prod_vp_{i_{uv}}=0 \in \rel_2$$
and, hence, $\psi_0(x_1)=\psi_0(x_2)$, showing $\psi_0$ is well defined. We claim that $\psi_0$ is contractive. Indeed, let 
$$y=\sum_{j=1}^J\mu_j\prod_{k=1}^{K_j}q_{i_{jk}}\in A_0.$$
Denote by $\tilde{A}_0$ the *-subalgebra of $A_0$ generated by the (finite) set $\{q_{i_{jk}}\}_{j,k=1}^{J,K_j}$. By Lemma \ref{lem:set.of.orthogonal.projections}, $\tilde{A}_0$ is a C*-algebra and so $\psi_0$ restricted to $\tilde{A}_0$ is contractive. Since $y\in\tilde{A}_0$, then $||\psi_0(y)||\leq||y||$ and since $y\in A_0$ is arbitrary, then $\psi_0$ is contractive. Denote by $\psi$ the extension of $\psi_0$ to the closure of $A_0$, which is $A$. Clearly, $\phi$ and $\psi$ are the inverse of each other.
\end{proof}

Next corollary gives a step in looking for a universal version of a C*-algebra generated by projections and partial isometries. In some examples in the theory such as for graph C*-algebras \cite{MR3119197} and labelled spaces C*-algebras \cite{MR3680957}, there is a commutative C*-subalgebra generated by projections that plays an important role when studying the structure of the algebra.

\begin{corollary}\label{cor:subalgebra.isomorphism}
Let $A$ be a C*-algebra generated by a family of partial isometries $\{t_j\}_{j\in J}$ and $B$ a commutative C*-subalgebra of $A$ generated by family of projections $\{q_i\}_{i\in I}$. Consider a set $\{s_j\}_{j\in J}$ with the same cardinality of $J$, a set $\{p_i\}_{i\in I}$ with the same cardinality of $I$ and disjoint from $\{s_j\}_{j\in J}$ and define $\Gg=\{p_i\}_{i\in I}\cup \{s_j\}_{j\in J}$. Let $\rel_0$ be the set of all finite algebraic relations in $A$ involving $t_j$ and $q_i$ and denote by $\rel_1$ the set of all relations obtained from $\rel_0$ by replacing each $t_j$ and each $q_i$ with $s_j$ and $p_i$, respectively. Then, the natural surjective *-homomorphism $\Phi:C^*(\Gg, \rel_1)\to A$ restricts to an isomorphism $\widetilde{\Phi}:C^*(\{p_i\}_{i\in I})\to B$, where $C^*(\{p_i\}_{i\in I})$ is the C*-subalgebra of $C^*(\Gg, \rel_1)$ generated by the family 
$\{p_i\}_{i\in I}$.
\end{corollary}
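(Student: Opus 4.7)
The plan is to construct a *-homomorphism $\Psi: B \to C^*(\{p_i\}_{i\in I})$ inverse to $\widetilde{\Phi}$, with Theorem \ref{lem:commuting.projections.universality} serving as the bridge. Let $\rel_0^B$ denote the subset of $\rel_0$ consisting of relations $\sum_m \lambda_m \prod_n q_{i_{mn}} = 0$ that involve only the projections $q_i$ (no $t_j$ appears), and let $\rel_1^B$ and $\rel_2^B$ be the associated sets of norm relations in the variables $\{p_i\}_{i\in I}$ defined exactly as in Theorem \ref{lem:commuting.projections.universality}. Since $B$ is commutative and generated by $\{q_i\}_{i\in I}$, that theorem supplies a *-isomorphism $\kappa: C^*(\{p_i\}_{i\in I}, \rel_1^B \cup \rel_2^B) \to B$ with $\kappa(p_i) = q_i$.

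Next, I would verify that the elements $p_i \in C^*(\Gg, \rel_1)$ satisfy every relation in $\rel_1^B \cup \rel_2^B$. Each $p_i$ is a projection and the family pairwise commutes because the corresponding identities hold for the $q_i$ in $A$, hence belong to $\rel_0$ and are enforced by $\rel_1$; similarly, every element of $\rel_2^B$ is obtained by translating an element of $\rel_0^B \scj \rel_0$, so it also lies in $\rel_1$. By the universal property of Blackadar's norm-relation algebra, this yields a *-homomorphism $\alpha: C^*(\{p_i\}_{i\in I}, \rel_1^B \cup \rel_2^B) \to C^*(\Gg, \rel_1)$ sending each generator to the corresponding $p_i$. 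Its range, being a C*-subalgebra containing all the $p_i$'s, is precisely $C^*(\{p_i\}_{i\in I})$, so I can view $\alpha$ as a surjection onto $C^*(\{p_i\}_{i\in I})$.

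Define $\Psi := \alpha \circ \kappa^{-1}: B \to C^*(\{p_i\}_{i\in I})$, so that $\Psi(q_i) = p_i$ for every $i \in I$. Then $\widetilde{\Phi} \circ \Psi$ is a *-homomorphism from $B$ to $B$ fixing every generator $q_i$, and $\Psi \circ \widetilde{\Phi}$ is a *-homomorphism from $C^*(\{p_i\}_{i\in I})$ to itself fixing every generator $p_i$; both compositions are therefore the identity, establishing that $\widetilde{\Phi}$ is a *-isomorphism with inverse $\Psi$.

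There is no serious technical obstacle here; the one conceptual step worth highlighting is the observation that the subset of $\rel_1$ involving only the $p_i$ generators already captures every finite algebraic identity holding among the projections in $B$, so the universal description of $B$ afforded by Theorem \ref{lem:commuting.projections.universality} transfers cleanly to the C*-subalgebra $C^*(\{p_i\}_{i\in I})$ of $C^*(\Gg, \rel_1)$. Once this is in place, the remainder is routine bookkeeping across the three algebras $A$, $B$, and $C^*(\Gg, \rel_1)$.
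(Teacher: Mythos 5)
Your proposal is correct and follows essentially the same route as the paper: apply Theorem \ref{lem:commuting.projections.universality} to realize $B$ as the universal C*-algebra on the projection-only relations, use the universal property to map that algebra onto $C^*(\{p_i\}_{i\in I})$ inside $C^*(\Gg,\rel_1)$, and compose to produce a two-sided inverse of $\widetilde{\Phi}$ checked on generators. The only difference is notational (the paper introduces a separate symbol set $\{r_i\}_{i\in I}$ for the intermediate universal algebra, while you reuse the $p_i$'s), which is immaterial.
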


\begin{proof}
Clearly $\widetilde{\Phi}$ is surjective, so we only need to proof the injectivity. Let $\{r_i\}_{i\in I}$ be a set with the same cardinality of $I$ and $\rel_2$ be the set of all relations obtained by considering in $\rel_0$ those relations that can be written using only $\{q_i\}_{i\in I}$ and then replacing each $q_i$ with $r_i$. By Theorem \ref{lem:commuting.projections.universality}, the universal *-homomorphism $\Psi:C^*(\{r_i\}_{i\in I}, \rel_2)\to B$ given by $\Psi(r_i)=q_i$ is an isomorphism. The universal property of $C^*(\{r_i\}_{i\in I}, \rel_2)$ also give us a surjective *-homomorphism $\Theta:C^*(\{r_i\}_{i\in I}, \rel_2)\to C^*(\{p_i\}_{i\in I})$ given by $\Theta(r_i)=p_i$. Since $(\Theta\circ\Psi^{-1})\circ \widetilde{\Phi}(p_i)=p_i$ and $\widetilde{\Phi}\circ(\Theta\circ\Psi^{-1})(q_i)=q_i$, then $\widetilde{\Phi}:C^*(\{p_i\}_{i\in I})\to B$ is an isomorphism.
\end{proof}

The next result is inspired by what in the literature is known as the Gauge Invariance Theorem. This class of results can be thought as being a uniqueness theorem and describes conditions to conclude that a map defined in a universal C*-algebra is injective. As we have seen in the previous results, if we use the generators of a C*-algebra to define a universal version of it, we automatically get a surjective *-homomorphism and, under conditions of Corollary \ref{cor:subalgebra.isomorphism}, we get a *-isomorphism of a distinguished C*-subalgebra. In the next theorem, under stronger hypothesis, we obtain a *-isomorphism for the whole algebra.

\begin{notation}
In next theorem, we use $\left< B\right>$ to denote the set of all finite products of elements in $B$.
\end{notation}

\begin{theorem}\label{thm:alg.gen.proj.part.iso}
Let $A$ be a C*-algebra generated by a family of projections $\{q_i\}_{i\in I}$ and a family of partial isometries $\{t_j\}_{j\in J}$ and suppose that there exists a strongly continuous action $\gamma:\circulo\to \mathrm{Aut}(A)$ such that for every $z\in\circulo$, $i\in I$ and $j\in J$, we have that $\gamma_z(q_i)=q_i$ and $\gamma_z(t_j)=zt_j$. Let $N=\left<\{q_i\}_{i\in I}\cup\{t_j\}_{j\in J}\cup\{t_j^*\}_{j\in J}\right>\cap A^{\gamma}$ and suppose that for every finite subset $Y\scj N$, the C*-algebra generated by $Y$ is finite dimensional. Consider a set $\{p_i\}_{i\in I}$ with the same cardinality of $I$ and a set $\{s_j\}_{j\in J}$ with the same cardinality of $J$ and disjoint from $\{p_i\}_{i\in I}$. Let $\Gg=\{p_i\}_{i\in I}\cup \{s_j\}_{j\in J}$ and $\Phi:\falg\to A$ be the map given by $\Phi(p_i)=q_i$ for all $i\in I$ and $\Phi(s_j)=t_j$ for all $j\in J$. Define $M=\{x\in \left<\Gg\cup\Gg^*\right> \, | \, \Phi(x)\in N\}$ and define sets of norm relations
\[\rel_1=\{p_i=p_i^*=p_i^2\mid i\in I\}\cup\{s_js_j^*s_j=s_j\mid j\in J\}\]
and
\[\rel_2=\left\{\sum_{x\in M}^{\text{finite}}\lambda_x x=0 \, \ \biggl| \, \ \Phi\left(\sum_{x\in M}^{\text{finite}}\lambda_x x\right)=0\right\}.\]
Then $\Phi$ factors to an isomorphism between $C^*(\Gg,\rel_1\cup\rel_2)$ and $A$.
\end{theorem}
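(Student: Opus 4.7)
The plan is to obtain $\widetilde{\Phi}$ as a surjective $*$-homomorphism by universality, and then to derive injectivity via a gauge-invariant uniqueness argument: lift $\gamma$ to $C^*(\Gg, \rel_1 \cup \rel_2)$, form a faithful conditional expectation, and reduce injectivity to isometry on the fixed point algebra, where the finite-dimensionality hypothesis provides the essential control. For existence, I would first observe that $\rel_1$ bounds every generator in norm by $1$, so Proposition \ref{prop:admissibility} gives admissibility of the triple $(\Gg, \rel_1 \cup \rel_2, \emptyset)$; since $\Phi$ itself satisfies the relations in $\rel_1 \cup \rel_2$ and $A$ admits a faithful representation, Theorem \ref{thm:universal.property} yields a surjective $*$-homomorphism $\widetilde{\Phi}: C^*(\Gg, \rel_1 \cup \rel_2) \to A$ extending $\Phi$.

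Next I would construct $\widetilde{\gamma}: \circulo \to \mathrm{Aut}(C^*(\Gg, \rel_1 \cup \rel_2))$ with $\widetilde{\gamma}_z(p_i) = p_i$ and $\widetilde{\gamma}_z(s_j) = zs_j$ by applying the universal property to the assignment $\Psi_z(p_i) = p_i$, $\Psi_z(s_j) = zs_j$. The relations in $\rel_1$ are preserved because $\Psi_z(s_js_j^*s_j - s_j) = z(s_js_j^*s_j - s_j)$. For $\rel_2$ the key structural observation is that any $x \in M$ either has $\Phi(x) = 0$, in which case the singleton relation $x = 0$ already belongs to $\rel_2$ and so $x = 0$ in $C^*(\Gg, \rel_1 \cup \rel_2)$, or has $\Phi(x) \neq 0$, in which case $\Phi(x) \in N \subseteq A^{\gamma}$ forces the gauge degree $d(x)$ (the number of unstarred minus starred $s$'s appearing in $x$) to vanish. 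Hence, modulo already-zero terms, $\Psi_z$ acts as the identity on each $x \in M$, so every $\rel_2$-relation is preserved. Strong continuity of $\widetilde{\gamma}$ reduces to continuity on generators together with a density and uniform-boundedness argument, and each $\widetilde{\gamma}_z$ is invertible with inverse $\widetilde{\gamma}_{z^{-1}}$.

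From $\widetilde{\gamma}$ I would form the faithful conditional expectation $\widetilde{E}(a) = \int_{\circulo} \widetilde{\gamma}_z(a)\, dz$ onto $C^*(\Gg, \rel_1 \cup \rel_2)^{\widetilde{\gamma}}$, which intertwines with the analogous expectation $E$ on $A^{\gamma}$ via $\widetilde{\Phi}$. A standard averaging argument identifies the fixed point algebra with the norm closure of $D$, the linear span of degree-zero words in the generators $p_i, s_j, s_j^*$. Once $\widetilde{\Phi}$ is shown to be isometric on $D$, injectivity follows by the usual trick: if $\widetilde{\Phi}(a) = 0$ then $\widetilde{\Phi}(\widetilde{E}(a^*a)) = E(\widetilde{\Phi}(a^*a)) = 0$, hence $\widetilde{E}(a^*a) = 0$, and faithfulness of $\widetilde{E}$ forces $a = 0$.

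The main obstacle is this isometry on $D$, and here is where the finite-dimensionality hypothesis enters. For $a \in D$, I would let $F$ be the finite set of degree-zero words supporting $a$ and set $Y = \Phi(F) \cup \Phi(F)^* \subseteq N$; by hypothesis, $C^*(Y) \subseteq A$ is finite-dimensional and therefore coincides with the $*$-algebra $B$ it generates. Letting $\widetilde{B}$ denote the $*$-algebra generated by $F$ inside $C^*(\Gg, \rel_1 \cup \rel_2)$, every word in $F \cup F^*$ is a degree-zero element of $M$, so $\widetilde{\Phi}|_{\widetilde{B}}$ surjects onto $B$; and any linear relation among such words holding in $A$ is by definition a member of $\rel_2$, hence already vanishes in $\widetilde{B}$. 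Consequently $\widetilde{\Phi}|_{\widetilde{B}}$ is a $*$-algebra isomorphism onto the finite-dimensional $B$, making $\widetilde{B}$ itself finite-dimensional, closed in $C^*(\Gg, \rel_1 \cup \rel_2)$, and a C*-subalgebra on which the unique C*-norm must agree with the one pulled back from $B$; hence $\|a\| = \|\widetilde{\Phi}(a)\|$, and density extends this isometry to all of $\overline{D}$, completing the reduction.
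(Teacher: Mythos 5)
Your proposal is correct and follows essentially the same route as the paper's proof: lift the gauge action to $C^*(\Gg,\rel_1\cup\rel_2)$ via the universal property, identify the fixed-point algebra with the closed span of the degree-zero words (which is the paper's $C^*(M)$), use $\rel_2$ together with the finite-dimensionality hypothesis to see that $\widetilde{\Phi}$ restricts to an injective, hence isometric, map on each finite-dimensional C*-subalgebra generated by finitely many such words, and conclude by the standard faithful-conditional-expectation argument. The only cosmetic differences are that you treat the words $x\in M$ with $\Phi(x)=0$ explicitly when checking that the lifted action preserves $\rel_2$, and you spell out the final uniqueness step that the paper delegates to a cited lemma.
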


\begin{proof}
The proof will follow the steps: we define a strong continuous action $\beta:\circulo\to\mathrm{Aut}(C^*(\Gg,\rel_1\cup\rel_2))$, find its invariant set $C^*(\Gg,\rel_1\cup\rel_2)^\beta$, show that $\Phi$ is injective over $C^*(\Gg,\rel_1\cup\rel_2)^\beta$ and then conclude that $\Phi$ is an isomorphism.

Let $y\in N$ and let $m$, $n$ and $k$ be the number of $q_i$'s, $t_j$'s and $t_j^*$'s in $y$, respectively. For $z\in\circulo$, we have $\gamma_z(y) = z^{n-k}y$ and since $y\in A^\gamma$, then $n=k$. Then $x\in M$ if and only if $x$ has the same quantity of $s_j$'s and $s_j^*$'s. Fix $z\in\circulo$ and consider $\beta_z:\falg\to C^*(\Gg,\rel_1\cup\rel_2)$ given by $\beta_z(p_i)=p_i$ and $\beta_z(s_j)=zs_j$. It's clear that $\beta_z$ satisfies the relations in $\rel_1$. From the previous characterization of elements in $M$, if $x\in M$ then $\beta_z(x)=x$ and, therefore, $\beta_z$ satisfies the relations in $\rel_2$ too. Hence, $\beta_z$ factors to an endomorphism of $C^*(\Gg,\rel_1\cup\rel_2)$. Since $\beta_z\beta_w$ and $\beta_{zw}$ coincides in $\Gg$, then $\beta_z\beta_w=\beta_{zw}$ showing $\beta$ is an action by automorphisms. A standard $\varepsilon/3$ argument shows that $\beta$ is strongly continuous.

We claim that $C^*(\Gg,\rel_1\cup\rel_2)^\beta=C^*(M)$, where $C^*(M)$ is the C*-subalgebra of $C^*(\Gg,\rel_1\cup\rel_2)$ generated by $M$. Clearly, $C^*(M)\subseteq C^*(\Gg,\rel_1\cup\rel_2)^\beta$. To see the other inclusion, consider the conditional expectation $E:C^*(\Gg,\rel_1\cup\rel_2)\to C^*(\Gg,\rel_1\cup\rel_2)^\beta$ associated with $\beta$ given by
$$E(b) = \int_{\circulo}\beta_z(b)dz.$$
If $x\in \left<\Gg\cup\Gg^*\right>$, then $E(x)=x$ if $x\in M$ and $E(x)=0$ if $x\notin M$. Let $b\in C^*(\Gg,\rel_1\cup\rel_2)^\beta$ and choose a sequence $(b_k)_{k\in\nn}$ in $\falg$ converging to $b$. Each $b_k$ is a linear combination of elements in $\left<\Gg\cup\Gg^*\right>$ and then $E(b_k)$ is a linear combination of elements in $M$. Thus,
$$b = E(b) = E(\lim_k b_k) = \lim_k E(b_k) \in C^*(M).$$

Now, let's deal with $\Phi$. Since the image by $\Phi$ of relations in $\rel_1\cup\rel_2$ are obviously satisfied in $A$, then $\Phi$ factors to a surjective homomorphism $\Phi:C^*(\Gg,\rel_1\cup\rel_2)\to A$. We claim that $\Phi$ is injective on $C^*(\Gg,\rel_1\cup\rel_2)^\beta$. To achieve this goal, it suffices to show that $\Phi$ is isometric on linear combinations of elements of $M$, which is dense in $C^*(\Gg,\rel_1\cup\rel_2)^\beta$. By relations in $\rel_2$, $\Phi$ is injective on linear combinations of $M$. Now, fix $X$ a finite subset of $M$. By hypothesis, $C^*(\Phi(X))$ is finite dimensional and since $\Phi$ is injective on linear combinations of elements of $M$, then $C^*(X)$ is finite dimensional. Furthermore, since $C^*(X)$ is contained on the set of linear combinations of elements of $M$, then the restriction $\Phi:C^*(X)\to C^*(\Phi(X))$ is an injective homomorphism between C*-algebras and, hence, isometric. Since $X$ is arbitrary, then $\Phi$ is isometric on linear combinations of elements of $M$. This shows $\Phi$ is injective on
$C^*(\Gg,\rel_1\cup\rel_2)^\beta$.

Clearly $\Phi(\beta_z(b))=\gamma_z(\Phi(b))$ for all $b\in \falg$, and all $z\in\circulo$. By continuity we can extend to all $b\in C^*(\Gg,\rel_1\cup\rel_2)$. Hence,
\[\left\|\Phi\left(\int_{\circulo}\beta_z(b)dz\right)\right\|=\left\|\int_{\circulo}\gamma_z(\Phi(b))\right\|\leq \|\Phi(b)\|\]
for all $b\in C^*(\Gg,\rel_1\cup\rel_2)$. By \cite[Lemma~2.2]{MR1126190}, $\Phi$ is injective.
\end{proof}

\begin{example}[Ultragraph C*-algebras]
An ultragraph is a quadruple $\ug=\ugquad$, where $E^0$ and $\ug^1$ are sets, whose elements are called vertices and edges respectively, and $s:\ug^1\to E^0$ and $r:\ug^1\to\mathcal{P}(E^0)$ are functions called source and range, where $\mathcal{P}(E^0)$ is the power set of $E^0$. We denote by $\ug^0$ the smallest subset of $\mathcal{P}(E^0)$ closed under finite intersections and unions containing $\emptyset$, $r(e)$ for all $e\in \ug^1$ and $\{v\}$ for all $v\in E^0$. We adapt Tomforde's definition in \cite{MR2050134} allowing infinite sum relations. We define $C^*(\ug)$ to be the universal C*-algebra generated by a set of projections $\{p_A\}_{A\in\ug^0}$ and partial isometries with mutually orthogonal ranges $\{s_e\}_{e\in\ug^1}$ satisfying the relations:
\begin{enumerate}
    \item $p_{\emptyset}=0$, $p_{A\cap B}=p_Ap_B$ and $p_{A\cup B}=p_A+p_B-p_{A\cap B}$ for all $A,B\in\ug^0$;
    \item $s_e^*s_e=p_{r(e)}$ for all $e\in\ug^1$;
    \item $p_{\{v\}}=\sum_{e\in s^{-1}(v)}s_es_e^*$ for all $v\in E^0$ such that $s^{-1}(v)\neq\emptyset$.
\end{enumerate}
As in the examples of Section \ref{sec:Examples}, we can prove that the relations are admissible and that $C^*(\ug)$ coincides with the one defined by Tomforde in \cite{MR2050134} and, in fact, both examples of Section \ref{sec:Examples} can be modeled using ultragraphs. Assuming that we did not have Tomforde's description, let us prove that we can apply Theorem \ref{thm:alg.gen.proj.part.iso} to this example. Given $n\in\nn$ with $n\geq 1$, a path of length $n$ is a sequence of $n$ edges $\alpha=\alpha_1\ldots\alpha_n$ such that $s(\alpha_{i+1})\in r(\alpha_i)$ for all $i=1,\ldots,n-1$. The set of all paths of length $n$ is denoted by $\ug^n$. The elements of $\ug^0$ are thought to be paths of length zero and we define $\ug^*=\bigcup_{n=0}^{\infty}\ug^n$. For a path $\alpha=\alpha_1\ldots\alpha_n$ with positive length, we define $|\alpha|=n$ and $s_{\alpha}=s_{\alpha_1}\cdots s_{\alpha_n}$, and for $A\in \ug^0$, we set $|A|=0$ and $s_A=p_A$. As pointed out in \cite[Remark~2.10]{MR2050134}, any finite word in $\{p_A\}_{A\in\ug^0}$, $\{s_e\}_{e\in\ug^1}$ and $\{s_e^*\}_{e\in\ug^1}$ can be rewritten as $s_\alpha p_A s_\beta^*$ for some $\alpha,\beta\in\ug^*$ and $A\in\ug^0$. Also, it is standard to show that there is a strongly continuous action $\gamma$ of $\circulo$ on $C^*(\ug)$ given by $\gamma_z(p_A)=p_A$ and $\gamma_z(s_e)=zs_e$ for all $z\in\circulo$, $A\in\ug^0$ and $e\in\ug^1$.

If $M$ is the set described in the statement of Theorem \ref{thm:alg.gen.proj.part.iso}, we can see that elements of $M$ can be written as $s_\alpha p_A s_\beta^*$ for some $\alpha,\beta\in\ug^*$ and $A\in\ug^0$ where $|\alpha|=|\beta|$. A finite $X\scj M$ is of the form $\{s_{\alpha_1} p_{A_1} s_{\beta_1}^*,\ldots,s_{\alpha_n} p_{A_n} s_{\beta_n}^*\}$ for some $n\in\nn$ and $\alpha_i,\beta_i\in\ug^*$, $A_i\in\ug^0$ for all $i=1,\ldots,n$. We let $F$ be the set of all edges that appears in a path $\alpha_i$ or $\beta_i$ for some $i$, $N$ be the greatest length of all paths $\alpha_i$, and $Q=\{p_{A_i}\}_{i=1}^n\cup \{p_{r(e)}\}_{e\in F}$. Since the elements of $Q$ are commuting projections, we can apply Lemma \ref{lem:set.of.orthogonal.projections}, to find a set $P$ of mutually orthogonal projections such that $\spanop P=\spanop Q$. Consider also $W$ the set of all paths of length at most $N$ and whose edges belong to $F$ and $Y=\{s_\alpha p s_\beta^*\mid \alpha,\beta\in W, p\in P\}$. Using \cite[Lemmas 2.8 and 2.9]{MR2050134}, we see that $C^*(X)\scj C^*(Y)=\spanop Y$, concluding that $C^*(X)$ is finite dimensional, and therefore Theorem \ref{thm:alg.gen.proj.part.iso} can be applied, which means that the C*-algebra of an ultragraph can be defined using infinite sum relations but can be described using only norm relations.
\end{example}

\bibliographystyle{abbrv}
\bibliography{ref}

\end{document}